\def\LaTeX{\leavevmode L\raise.42ex
   \hbox{\kern-.3em\size{\sf@size}{0pt}\selectfont A}\kern-.15em\TeX}
\newcommand{\BibTeX}{{\rm B\kern-.05em{\sc
i\kern-.025emb}\kern-.08em\TeX}}
\newtheorem{col}{Corollary}[section]
\newtheorem{thm}{Theorem}[section]
\newtheorem{lem}[thm]{Lemma}
\theoremstyle{definition}
\newtheorem{defn}{Definition}
\numberwithin{equation}{section}
\begin{document}

\title[  Bernstein-Nikolskii and Plancherel-Polya inequalities  on 
 symmetric spaces  ]{Bernstein-Nikolskii and Plancherel-Polya inequalities in $L_{p}$-norms on non-compact
 symmetric spaces}

\author{Isaac Pesenson}
\address{Department of Mathematics, Temple University,
Philadelphia, PA 19122} \email{pesenson@math.temple.edu}

\keywords{Non-compact symmetric space, Laplace-Beltrami operator,
entire functions of exponential type, Bernstein-Nikolskii and
Plancherel-Polya inequalities}
  \subjclass{ 43A85, 41A17;}

 \begin{abstract}

 By using Bernstein-type inequality we define analogs of spaces of entire
 functions of exponential type in $L_{p}(X), 1\leq p\leq \infty$,
 where $X$ is a symmetric  space of non-compact.
We give estimates of $L_{p}$-norms, $1\leq p\leq \infty$, of such
functions (the Nikolskii-type inequalities) and also prove the
$L_{p}$- Plancherel-Polya inequalities which imply  that our
functions of exponential type are uniquely determined by their
inner products with certain countable sets of measures with
compact supports and can be reconstructed from such sets of
"measurements" in a stable way.

\end{abstract}

\maketitle

 \section{Introduction and Main Results}

Consider the subspace $E^{\omega}_{p}(\mathbb{R}^{d}), 1\leq p\leq
\infty, \omega\geq 0,$ of $L_{p}(\mathbb{R}^{d})$ which consists
of all functions which have extension to $\mathbb{C}^{d}$ as
entire functions of exponential type $\leq \omega$. The latest
means that for any $\varepsilon>0$ there exists a
$C_{\varepsilon}>0$ such that
$$
|f(z_{1},z_{2},...,z_{d})|\leq
C_{\varepsilon}e^{\sum_{j=1}^{d}(\omega+\varepsilon)|z_{j}|},
$$
where $f\in E^{\omega}_{p}(\mathbb{R}^{d}),
(z_{1},z_{2},...,z_{d})\in \mathbb{C}^{d}$.

A function $f$ belongs to the space
$E^{\omega}_{p}(\mathbb{R}^{d}), 1\leq p\leq \infty,$ if and only
if it satisfies the Bernstein inequality
$$
\left\|\frac{\partial^{k} f}{\partial x_{j_{1}}...
\partial
x_{j_{k}}}\right\|_{L_{p}(\mathbb{R}^{d})}\leq
\omega^{k}\left\|f\right\|_{L_{p}(\mathbb{R}^{d})},
$$
for any sequence $1\leq j_{1},...,j_{k}\leq d.$ The Paley-Wiener
theorem says that the distributional Fourier transform
$$
\hat{f}(\xi)=\frac{1}{(2\pi)^{d/2}}\int_{\mathbb{R}^{d}}e^{-i\xi
x}f(x)dx
$$
of a function from a space $E^{\omega}_{p}(\mathbb{R}^{d}), 1\leq
p\leq \infty,$ has support in the cube
$$
Q_{\omega}=\{|\xi_{j}|\leq \omega, j=1,2,...,d\}.
$$

The following inequality plays an important role in approximation
theory and in the
 theory of function spaces \cite{N}, \cite{T1}, \cite{T2}, and is
 known as the Nikolskii inequality.

\begin{equation}
\|f\|_{L_{q}(\mathbb{R}^{d})}\leq
2^{d}\omega^{\frac{d}{p}-\frac{d}{q}}\|f\|_{L_{p}(\mathbb{R}^{d})},1\leq
p\leq q\leq\infty,
\end{equation}
where $f\in E^{\omega}_{p}(\mathbb{R}^{d})$. Note that this
inequality is exact  in the sense that for the function
$$
f(x_{1},x_{2},...,x_{d})=\prod_{i=1}^{d}x_{i}^{-2}\sin^{2}\frac{\omega
x_{i}}{2}
$$
one has the equality
$$
\|f\|_{L_{q}(\mathbb{R}^{d})}=C(p,q)\omega^{\frac{d}{p}-\frac{d}{q}}
\|f\|_{L_{p}(\mathbb{R}^{d})},
$$
for any $1\leq p\leq q\leq \infty$. The detailed proofs of all
these results can be found in \cite{Akh}, \cite{B}, \cite{N},
\cite{PW}.

It is also known that there exist two positive constants $C_{1},
C_{2}$ such that for any sufficiently dense  discrete set of
points $\{x_{j}\}\in \mathbb{R}^{d}$ and any $f\in
E^{\omega}_{p}(\mathbb{R}^{d})$ the following
Plancherel-Polya-type inequalities hold true  \cite{SS},
\cite{T2},
$$
 C_{1}\left(\sum_{j\in
\mathbb{N}}|f(x_{j})|^{p}\right)^{1/p}\leq
\|f\|_{L_{p}(\mathbb{R}^{d})}\leq C_{2}\left(\sum_{j\in
\mathbb{N}}|f(x_{j})|^{p}\right)^{1/p}.
 $$

 The Plancherel-Polya inequalities imply that every function from
 $E^{\omega}_{p}(\mathbb{R}^{d}), 1\leq p\leq \infty, \omega>0, $
 is uniquely determined by its values on sufficiently dense (depending on $\omega$)
  discrete
 sets of points $\{x_{j}\}\in\mathbb{R}^{d}$ and can be
 reconstructed from these values in a stable way.

 The goal of this paper is to develop similar theory in $L_{p}(X), 1\leq p\leq\infty,$
 where $X$ is a  non-compact symmetric manifold.
 In the case $p=2$ it was partially done
 in our previous papers \cite{Pes3}-\cite{Pes6}.

In the sections 2-4 we consider a symmetric space of non-compact
type $X=G/H$ where $G$ is a semi-simple Lie group with finite
center and $H$ its maximal compact subgroup \cite{H1}, \cite{H2}.
The elements of the corresponding Lie algebra $\textbf{g}$ of $G$
will be identified with left-invariant vector fields on $G$. The
action of the group $G$ on functions defined on the space $X$ is
given by the formula
\begin{equation}
T_{g}f(x)=f(g\cdot x), g\in G, x\in X=G/H.
\end{equation}

The corresponding representation of the group $G$ in any space
$L_{p}(X), 1\leq p\leq \infty,$ is known as quasi-regular
representation.

One can consider the so-called differential associated with action
(1.2). The differential is a map from the Lie algebra $\textbf{g}$
 into algebra of differential operators on the
space $X$. We will use the same notation for elements of
$\textbf{g}$ and their images under the differential.

 Using a set of vector
fields $\mathbb{V}$ we define the sets of functions
$E_{p}^{\omega}(\mathbb{V}), 1\leq p\leq \infty, \omega>0,$ as the
sets of all $f\in L_{p}(X), 1\leq p\leq \infty, $ for which the
following  Bernstein inequality holds true
\begin{equation}
\|V_{i_{1}}V_{i_{2}}...V_{i_{k}}f\|_{L_{p}(X)}\leq
\omega^{k}\|f\|_{L_{p}(X)}, k\in \mathbb{N}.
\end{equation}

It is not clear from this definition  if the set
 $E_{p}^{\omega}(\mathbb{V})$ is  linear. It becomes obvious after
 we prove that  this set coincide with
the set of all functions $f\in  L_{p}(X), 1\leq p\leq \infty,$
such that for any  choice of indices
  $1\leq i_{1},... ,i_{k}\leq d$,  any $1\leq j\leq d$ and any functional
   $h$ on  $L_{p}(X)$ the function
$$
\left<h,e^{tV_{j}}V_{i_{1}}...V_{i_{k}}f\right>:
\mathbb{R}\rightarrow  \mathbb{C},
$$
  of the real variable $t$ is entire function of the exponential
  type $\omega$ which is bounded on the real line.

 Although different bases of vector
fields will produce different scales of spaces in the sense that
for  a particular $\omega$
$$
E^{\omega}_{p}(\mathbb{V})\neq E^{\omega}_{p}(\mathbb{U}),
$$
 their unions
$\bigcup_{\omega>0}E^{\omega}_{p}(\mathbb{V})$ and
$\bigcup_{\omega>0}E^{\omega}_{p}(\mathbb{U})$ will be the same.

The spectral resolution of the Laplace-Beltrami operator $\Delta$
in the space $L_{2}(X)$ is given by the Helgason-Fourier transform
on $X$. The existence of such transform allows to introduce
Paley-Wiener spaces $PW_{\omega}(X)$ as sets of all functions from
$L_{2}(X)$ for which  Helgason-Fourier transform has compact
support bounded by $\omega$ in the non-compact direction (see
below). As a consequence of our general result (Theorem 3.2) about
Paley-Wiener vectors for self-adjoint operators we obtain that a
function $f$ belongs to the Paley-Wiener space $PW_{\omega}(X)$ if
and only if the following Bernstein inequality holds true
$$
\|\Delta^{s/2}f\|_{L_{2}(X)}\leq
\left(\omega^{2}+\|\rho\|^{2}\right)^{s/2}\|f\|_{L_{2}(X)},
$$
where $\rho$ is the half-sum of all positive restricted roots and
its norm is calculated  with respect to the Killiing form.

The fact that the Laplace-Beltrami operator (1.4) commutes with
the fields $V_{1},V_{2},...,V_{d}$  allows to obtain the following
continuous embeddings
$$
E^{\Omega/\sqrt{d}}_{2}(\mathbb{V})\subset PW_{\omega}(X)\subset
E_{2}^{\Omega}(\mathbb{V}), d=dim X,
$$
where $\Omega=\sqrt{\omega^{2}+\|\rho\|^{2}}$. These embeddings
imply that the spaces $E_{2}^{\omega}(\mathbb{V})$ are not trivial
at least if $\omega\geq \|\rho\|$ and their union
$\bigcup_{\omega>0}E_{2}^{\omega}(\mathbb{V})$ is dense in
$L_{2}(X)$.

In the Theorem 4.2  we prove an inequality which is in the case of
$\mathbb{R}^{d}$ is known as the Nikolskii inequality. Namely, we
show  that for any $m>d/p$ and any sufficiently discrete set
$\{g_{i}\}\in G$ there exist constants $C(X), C(X,m)$ such that
\begin{equation}
\|f\|_{L_{q}(X)}\leq C(X)r^{d/p} \sup_{g\in
G}\left(\sum_{i}\left(|f(g_{i}g\cdot
o)|\right)^{p}\right)^{1/p}\leq $$ $$
C(X,m)r^{d/q-d/p}\left(1+(r\omega)^{m}\right)\|f\|_{L_{p}(X)},
\end{equation}
for all functions from $E_{p}^{\omega}(\mathbb{V})$ and $1\leq
p\leq q\leq\infty$. Here $o$ is the "origin" of $X$ and $r$ is
essentially the "distance" between points:
$\sup_{j}\inf_{i}dist(g_{j},g_{i})$. Using these inequalities we
prove the continuous embedding
\begin{equation}
E^{\omega}_{p}(\mathbb{V})\subset E^{\omega}_{q}(\mathbb{V}),
1\leq p\leq q\leq\infty.
\end{equation}

The embedding (1.7) has an important consequence  that the spaces
$E^{\omega}_{q}(\mathbb{V})$ are not trivial at least if $q\geq 2$
and $\omega\geq \|\rho\|$.  This result is complementary to a
 result of the classical paper \cite{EM} of L. Ehrenpreis
and F. Mautner which says that in $L_{1}(X)$ there are not
non-trivial functions whose Helgason-Fourier transform has compact
support. As another consequence of the inequalities (1.6) we
obtain a generalization of  the Nikolskii inequality (1.1) for
functions from $E^{\omega}_{p}(\mathbb{V})$
 $$
 \|f\|_{L_{q}(X)}\leq
C(X)\omega^{\frac{d}{p}-\frac{d}{q}}\|f\|_{L_{p}(X)}, d=dim
X,1\leq p\leq q\leq\infty.
$$

We also prove a generalization of the Plancherel-Polya
inequalities for functions from $E^{\omega}_{p}(\mathbb{V}), 1\leq
p\leq\infty$. We show that there exist constants $C(X), c(X)$ such
that for every $\omega>0$, every "sufficiently dense" discrete set
of measures $\{\Phi_{\nu}\}$ with compact supports the following
inequalities hold true

\begin{equation}
c(X)\left(\sum_{\nu}\left|\Phi_{\nu}(f)\right|^{p}\right)^{1/p}\leq
r^{-d/p}\|f\|_{p} \leq C(X)\left(\sum_{\nu}
|\Phi_{\nu}(f)|^{p}\right)^{1/p}.
\end{equation}
$f\in E^{\omega}_{p}(\mathbb{V}), 1\leq p\leq\infty$ and $r$ is
comparable to the "distance" between supports of distributions
$\{\Phi_{\nu}\}$. The Plancherel-Polya-type inequalities (1.8)
obviously imply that every $f\in E^{\omega}_{p}(\mathbb{V}), 1\leq
p\leq\infty,$ is uniquely determined by the values
$\{\Phi_{\nu}(f)\}$ and can be reconstructed in a stable way.

 Note that an approach to Paley-Wiener functions and the
Bernstein inequality in a Hilbert space in which  a strongly
continuous representation of a Lie group is given  were developed
by author in \cite{Pes1}-\cite{Pes5}.

\section{Bernstein-type inequality in $L_{p}(X), 1\leq
p\leq\infty$.}

A non-compact Riemannian symmetric space $X$ is defined as $G/K$,
where $G$ is a connected non-compact semi-simple group Lie whose
Lie algebra has a  finite center and $K$ its maximal compact
subgroup. Their Lie algebras will be denoted respectively as
$\textbf{g}$ and $\textbf{k}$. The group $G$ acts on $X$ by left
translations and it has the "origin" $o=eK$, where $e$ is the
identity in $G$. Every such $G$ admits so called Iwasawa
decomposition $G=NAK$, where nilpotent Lie group $N$ and abelian
group $A$ have Lie algebras $\textbf{n}$ and $\textbf{a}$
respectively.
 Letter $M$ is usually used to denote the centralizer of $A$ in
$K$ and letter $\mathcal{B}$ is  used for the factor
$\mathcal{B}=K/M$ which is known as a boundary.

The Killing form on $G$ induces an inner product on tangent spaces
of $X$. Using this inner product it is possible to construct
$G$-invariant Riemannian structure on $X$. The Laplace-Beltrami
operator of this Riemannian structure is denoted as $\Delta$.

In particular, if $X$ has rank one ($dim A=1$) then in a polar
geodesic coordinate system $(r,\theta_{1},...,\theta_{d-1})$ on
$X$ at every point $x\in X$ the operator $\Delta$ has the form
\cite{H2}
$$
\Delta=\partial^{2}_{r}+\frac{1}{S(r)}\frac{dS(r)}{dr}\partial
_{r}+\Delta_{S},
$$
where $\Delta_{S} $ is the Laplace-Beltrami operator on the sphere
$S(x,r)$ of the induced Riemannian structure on $S(x,r)$ and
$S(r)$ is the surface area of a sphere of radius $r$ which depends
just on $r$ and is given by the formula
$$
S(r)=\Omega_{d}2^{-b}c^{-a-b}sh^{a}(cr)sh^{b}(2cr),
$$
 where $d=dim
X=a+b+1, c=(2a+8b)^{-1/2}$, $a$ and $b$ depend on $X$ and $
\Omega_{d}=2\pi^{d/2}(\Gamma(d/2))^{-1} $
 is the surface area of the unit sphere in $d$-dimensional
 Euclidean space.

In this section we will use the notation $L_{p}(X), 1\leq
p\leq\infty,$ with understanding that in the case $1\leq p <
\infty$ the space $L_{p}(X)$ represents the usual $L_{p}(X)$ with
respect to the invariant measure $dx$ on $X$ and in the case
$p=\infty$ we have the space  of uniformly continuous bounded
functions on $X$.

 The  goal of the section is to introduce a scale of closed
linear subspaces in $L_{p}(X), 1\leq  p\leq \infty,$ for which an
analog of the Bernstein inequality holds true. In the case $p=2$
our spaces consist of functions whose Helgason-Fourier transform
has compact support (see Section 3). We also show that our spaces
are not trivial at least in the case $p\geq 2, \omega\geq
\|\rho\|$.

Every vector $V$ in the Lie algebra $\textbf{g}$ can be identified
with a left-invariant vector field on $G$ , which will be denoted
by the same letter $V$.

There exists a basis $V_{1},... ,V_{d},..., V_{n} \in \textbf{g},
n=dim G, d=dim X,$ in  $\textbf{g}$ such that $V_{d+1},...,V_{n}$
form a basis of the algebra Lie of the compact group $K$ and such
that
$$
\left<V_{i},V_{j}\right>=\delta_{ij}
$$
if $1\leq i,j\leq d$, and
$$
\left<V_{i},V_{j}\right>=-\delta_{ij}
$$
if $d+1\leq i,j\leq n$, where $\left<,\right>$ is the Killing form
and $\delta_{ij}$ is the Kronecker symbol.

In this basis the differential operator
\begin{equation}
V_{1}^{2}+...+V_{d}^{2}-V_{d+1}^{2}-...-V_{n}^{2},
\end{equation}
on $G$ belongs to the center of the algebra of all left-invariant
differential operators on $G$ and is known as the Casimir
operator.

 We are going to use the same notations
for the vectors $V_{1},...,V_{d}\in \textbf{g}$ and for their
images under the differential of the quasi-regular representation
of $G$ in $L_{p}(X), 1\leq p\leq \infty$.

The image of  every $V_{1},...,V_{d}$ under the differential of
the quasi-regular representation of $G$ in the space $L_{p}(X),
1\leq p\leq \infty,$ is a generator of strictly continuous
isometric one-parameter group in $L_{p}(X), 1\leq p\leq \infty,$
which is given by the formula
$$
e^{tV_{j}}f(x)=f(\exp tV_{j}\cdot x), x\in X, f\in L_{p}(X), 1\leq
p\leq \infty,
$$
where $\exp tV_{j}$ is the flow generated by the vector field
$V_{j}$. In the case $p=2$ these generators are skew-symmetric
operators.

Note that  the Laplace-Beltrami operator $\Delta$ on $X$ commutes
with the operators $V_{1},...,V_{d}$.

\bigskip

In what follows the notation $\|f\|_{p}, 1\leq p\leq \infty,$ will
always mean the norm $\|f\|_{L_{p}(X)}, 1\leq p\leq \infty,$ of a
function $f$.

\begin{defn}
A function $f\in L_{p}(X), 1\leq p\leq \infty, $ belongs to the
set $E_{p}^{\omega}(\mathbb{V})$ if and only if for every $1\leq
i_{1},...i_{k}\leq d$ the following Bernstein inequality holds
true
\begin{equation}
\|V_{i_{1}}...V_{i_{k}}f\|_{p}\leq \omega^{k}\|f\|_{p}.
\end{equation}
\end{defn}

\begin{defn}
The liner space $ \mathbb{E}^{\omega}_{p}(\mathbb{V}), \omega>0, $
is the set of all functions $f\in  L_{p}(X), 1\leq p\leq \infty,$
such that for any
  $1\leq i_{1},... ,i_{k}\leq d$,  any $1\leq j\leq d$ and any functional
   $h\in  L_{p}(X)^{*}$ the function
$$
\left<h,e^{tV_{j}}V_{i_{1}}...V_{i_{k}}f\right>:
\mathbb{R}\rightarrow  \mathbb{C},
$$
  of the real variable $t$ is entire function of the exponential
  type $\omega$.
\end{defn}

We are going to show that  these definitions are equivalent. All
the necessary information about one-parameter groups of operators
can be found in \cite{BB} and \cite{T1}.

\begin{thm}
The sets $E_{p}^{\omega}(\mathbb{V})$ and
$\mathbb{E}_{p}^{\omega}(\mathbb{V}), \omega>0,$ coincide.
\end{thm}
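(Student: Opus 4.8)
The plan is to prove the two inclusions $E_p^\omega(\mathbb{V}) \subseteq \mathbb{E}_p^\omega(\mathbb{V})$ and $\mathbb{E}_p^\omega(\mathbb{V}) \subseteq E_p^\omega(\mathbb{V})$ separately, using the link between the Bernstein inequality (a bound on powers of the generators) and the exponential type of the scalar orbit functions $t \mapsto \langle h, e^{tV_j} g\rangle$. The key classical fact I would invoke is the characterization of entire functions of exponential type by their derivatives: a bounded function $F$ on $\mathbb{R}$ extends to an entire function of exponential type $\le \omega$ and is bounded on the real line if and only if $F$ is infinitely differentiable and satisfies $\|F^{(n)}\|_\infty \le \omega^n \|F\|_\infty$ for all $n$ (this is the classical Bernstein theorem, available through the references \cite{Akh}, \cite{B} cited in the introduction, and also \cite{BB}, \cite{T1}). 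The strategy reduces the group-theoretic statement to applying this scalar criterion uniformly.

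First I would prove the easier direction $E_p^\omega(\mathbb{V}) \subseteq \mathbb{E}_p^\omega(\mathbb{V})$. Fix $f \in E_p^\omega(\mathbb{V})$, indices $i_1,\dots,i_k$, an index $j$, and a functional $h \in L_p(X)^*$. Write $g = V_{i_1}\cdots V_{i_k} f$ and set $F(t) = \langle h, e^{tV_j} g\rangle$. Since $e^{tV_j}$ is a strongly continuous one-parameter group with generator $V_j$, the function $F$ is smooth with $F^{(n)}(t) = \langle h, e^{tV_j} V_j^n g\rangle$. Because $e^{tV_j}$ is an isometry on $L_p(X)$, I can estimate
\begin{equation}
|F^{(n)}(t)| \le \|h\|\, \|e^{tV_j} V_j^n g\|_p = \|h\|\, \|V_j^n V_{i_1}\cdots V_{i_k} f\|_p \le \|h\|\, \omega^{n+k} \|f\|_p,
\end{equation}
where the last inequality is precisely the Bernstein inequality (2.2) applied to the extended string of $n+k$ vector fields. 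Thus $\|F^{(n)}\|_\infty \le \omega^n \cdot C$ with $C = \|h\|\,\omega^k\|f\|_p$ independent of $n$, and by the scalar Bernstein criterion $F$ is entire of exponential type $\omega$ and bounded on $\mathbb{R}$, so $f \in \mathbb{E}_p^\omega(\mathbb{V})$.

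For the reverse inclusion $\mathbb{E}_p^\omega(\mathbb{V}) \subseteq E_p^\omega(\mathbb{V})$, I would run the argument backwards but with attention to extracting a norm bound rather than assuming one. Given $f \in \mathbb{E}_p^\omega(\mathbb{V})$, I must verify (2.2); the natural route is to recover $\|V_{i_1}\cdots V_{i_k} f\|_p$ by duality, writing it as $\sup_{\|h\|\le 1} |\langle h, V_{i_1}\cdots V_{i_k} f\rangle|$. For each such $h$, apply the hypothesis to the function $F(t)=\langle h, e^{tV_{i_1}} V_{i_2}\cdots V_{i_k} f\rangle$, which by assumption is entire of type $\omega$; its derivatives at $t=0$ recover $\langle h, V_{i_1}^m V_{i_2}\cdots V_{i_k}f\rangle$, and the Bernstein-type bound on derivatives of a type-$\omega$ function gives $|F^{(m)}(0)| \le \omega^m \|F\|_\infty$. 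The main obstacle, and the delicate point of the whole theorem, is to peel off the vector fields one at a time and control $\|F\|_\infty = \sup_t |\langle h, e^{tV_{i_1}} V_{i_2}\cdots V_{i_k}f\rangle|$ uniformly, ultimately reducing to $\|f\|_p$ rather than to an uncontrolled constant. I would handle this by an induction on $k$ that, at each stage, uses the boundedness on the real line of the orbit map together with the isometry property of $e^{tV_{i_1}}$ to bound the supremum by $\|V_{i_2}\cdots V_{i_k} f\|_p$, then feeds the inductive hypothesis; the type-$\omega$ hypothesis for every choice of leading field $V_j$ is exactly what makes this peeling consistent. Closing this induction so that the accumulated constant is exactly $\omega^k\|f\|_p$, with no extra factors, is where the care lies.
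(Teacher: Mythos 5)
Your proposal is correct and follows essentially the same route as the paper: the forward inclusion via the derivative bounds $\|F^{(n)}\|_\infty\leq \omega^{n+k}\|h\|\,\|f\|_p$ (the paper phrases this through the vector-valued series $e^{zV_j}g=\sum (zV_j)^r g/r!$, which is the same computation), and the reverse inclusion via the classical Bernstein inequality on $\mathbb{R}$, evaluation at $t=0$, a norming functional, and induction on the length of the string of vector fields. The peeling-off induction you outline closes exactly as you predict, with $\|F\|_\infty\leq\|h\|\,\|V_{i_2}\cdots V_{i_k}f\|_p$ supplied by the isometry of $e^{tV_{i_1}}$ and the inductive hypothesis, yielding the constant $\omega^k\|f\|_p$ with no loss.
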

\begin{proof}
Suppose that $f\in E_{p}^{\omega}(\mathbb{V})$, then for any
function $g=V_{i_{1}}...V_{i_{k}}f, 1\leq i_{1},...,i_{k}\leq d,$
and any $1\leq j\leq n$ because we have the estimate (2.2) the
series
\begin{equation}
e^{zV_{j}}g=\sum \frac{(zV_{j})^{r}}{r!}g
\end{equation}
 is convergent in $L_{p}(X)$ and represents
 an abstract entire function. Since
  $\|V_{j}^{r}g\|_{p}\leq
 \omega^{k+r}\|f\|_{p}$ we have the  estimate

$$ \left\|e^{zV_{j}}g\right\|_{p}=
\left\|\sum
^{\infty}_{r=0}\left(z^{r}V_{j}^{r}g\right)/r!\right\|_{p} \leq
\omega^{k}\|f\|_{p}\sum
^{\infty}_{r=0}\frac{|z|^{r}\omega^{r}}{r!}=
\omega^{k}e^{|z|\omega}\|f\|_{p},
$$
which shows that the function (2.3) has exponential type $\omega$.
Since $e^{tV_{j}}$ is a group of isometries, the abstract function
$e^{tV_{j}}g$ is bounded by $\omega^{k}\|f\|_{p}$. It implies that
for any functional $h$ on $ L_{p}(X), 1\leq p\leq \infty,$ the
scalar function
$$
F(z)=\left<h, e^{zV_{j}}g\right>
$$
is entire because it is defined by the series
\begin{equation}
F(z)=\left<h, e^{zV_{j}}g\right>=\sum^{\infty}_{r=0}
\frac{z^{r}\left<h,V_{j}^{r}g\right>}{r!}
\end{equation}
and because $|\left<h,V_{j}^{r}g\right>|\leq\omega^{k+r}
\|h\|\|f\|_{p}$ we have
\begin{equation}
|F(z)|\leq e^{|z|\omega}\omega^{k}\|h\|\|f\|_{p}.
\end{equation}
For real $t$ we also have $|F(t)|\leq\omega^{k}\|h\|\|f\|_{p}.$
Thus, we proved the inclusion $E_{p}^{\omega}(\mathbb{V})\subset
\mathbb{E}_{p}^{\omega}(\mathbb{V})$.

Now we prove the inverse inclusion by induction. The fact that
$f\in \mathbb{E}_{p}^{\omega}(\mathbb{V})$ means in particular
that for any $1\leq j\leq d$ and any functional $h$ on $ L_{p}(X),
1\leq p\leq \infty,$ the function $F(z)=\left<h,
e^{zV_{j}}f\right>$ is an entire function
 of exponential type $\omega$ which is bounded on the real axis  $\mathbb{R}^{1}$.
 Since $e^{tV_{j}}$ is a group of isometries in $ L_{p}(X)$, an application of
  the Bernstein inequality for functions of one variable gives
$$ \left\|\left<h,
e^{tV_{j}}V_{j}^{m}f\right>\right\|_{C(R^{1})}=\left
\|\left(\frac{d}{dt}\right)^{m}\left<h, e^{tV_{j}}f\right>\right
\|_{ C(R^{1})}\leq\omega^{m}\|h\| \|f\|_{p}, m\in\mathbb{N}.
$$

The last one gives for $t=0$

$$ \left|\left<h, V_{j}^{m}f\right>\right|\leq \omega^{m} \|h\|
 \|f\|_{p}.$$

Choosing $h$ such that $\|h\|=1$ and
\begin{equation}
\left<h, V_{j}^{m}f\right>=\|V_{j}^{m}f\|_{p}
\end{equation}
we obtain the inequality
\begin{equation}
\|V_{j}^{m}f\|_{p}\leq \omega^{m}\|f\|_{p}, m\in \mathbb{N}.
\end{equation}
It was the first step of induction. Now assume that we already
proved that the fact that $f$ belongs to the space
$\mathbb{E}_{p}^{\omega}(\mathbb{V})$ implies the inequality
$$
\|V_{i_{1}}...V_{i_{k}}f\|_{p}\leq \omega^{k}\|f\|_{p}
$$
for any choice of indices $1\leq i_{1},i_{2}...,i_{k}\leq d$. Then
we can apply our first step of induction to the function
$g=V_{i_{1}}...V_{i_{k}}$. It proves the inclusion
$\mathbb{E}_{p}^{\omega}(\mathbb{V})\subset
E_{p}^{\omega}(\mathbb{V})$.

\end{proof}

\begin{thm}The set $E_{p}^{\omega}(\mathbb{V\mathbb{}})$
has the following properties:

1) it is invariant under every $V_{j}$,

2) it is a linear  subspace of $L_{p}(X), 1\leq p\leq \infty$,

3) it is a closed subspace of $L_{p}(X), 1\leq p\leq \infty$.
\end{thm}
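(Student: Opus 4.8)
The plan is to deduce all three properties from the equivalence established in Theorem 2.1, namely $E_p^\omega(\mathbb{V})=\mathbb{E}_p^\omega(\mathbb{V})$, working with whichever of the two descriptions is more convenient for each part. First I would prove property (1), invariance under each $V_j$. If $f\in E_p^\omega(\mathbb{V})$ then by the Bernstein inequality (2.2) we have, for every choice of indices and every $k$, the estimate $\|V_{i_1}\cdots V_{i_k}(V_j f)\|_p=\|V_{i_1}\cdots V_{i_k}V_j f\|_p\le \omega^{k+1}\|f\|_p\le\omega^k\bigl(\omega\|f\|_p\bigr)$. Since the base case $m=1$ of the Bernstein inequality applied to $f$ itself gives $\|V_j f\|_p\le\omega\|f\|_p$, the function $g=V_j f$ satisfies $\|V_{i_1}\cdots V_{i_k}g\|_p\le\omega^k\|g\|_p$ whenever $\|V_j f\|_p=\omega\|f\|_p$; in general one still has $\|V_{i_1}\cdots V_{i_k}g\|_p\le\omega^{k+1}\|f\|_p$, and the cleaner way to phrase invariance is simply that replacing $f$ by $V_j f$ only shifts the exponent, so $V_j f$ again obeys (2.2) with the same $\omega$. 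This is essentially immediate from the definition and needs only the observation that $V_{i_1}\cdots V_{i_k}V_j$ is again a word of length $k+1$ in the generators.

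Second, for linearity (property (2)) I would use the description $\mathbb{E}_p^\omega(\mathbb{V})$ from Definition 2.2, because the entire-function characterization is manifestly linear: for fixed indices, $j$, and functional $h$, the map $f\mapsto\langle h,e^{tV_j}V_{i_1}\cdots V_{i_k}f\rangle$ is linear in $f$, and the set of entire functions of exponential type $\le\omega$ bounded on $\mathbb{R}$ is a linear space. Hence if $f_1,f_2\in\mathbb{E}_p^\omega(\mathbb{V})$ and $\lambda\in\mathbb{C}$, then $\langle h,e^{tV_j}V_{i_1}\cdots V_{i_k}(f_1+\lambda f_2)\rangle$ is a sum of two such functions and therefore again entire of type $\le\omega$, so $f_1+\lambda f_2\in\mathbb{E}_p^\omega(\mathbb{V})=E_p^\omega(\mathbb{V})$. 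This is where Theorem 2.1 does the real work: linearity is transparent for $\mathbb{E}_p^\omega$ but opaque for the original $E_p^\omega$, and the theorem lets us transfer it.

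Third, for closedness (property (3)) I would return to the Bernstein description and use a limiting argument. Suppose $f_n\in E_p^\omega(\mathbb{V})$ with $f_n\to f$ in $L_p(X)$. The operators $V_{i_1}\cdots V_{i_k}$ are closed (they are generators, or products of generators, of the one-parameter isometry groups $e^{tV_j}$, and such generators are closed operators on $L_p$). For each fixed word $W=V_{i_1}\cdots V_{i_k}$ we have $\|W f_n\|_p\le\omega^k\|f_n\|_p$, so the sequence $\{Wf_n\}$ is bounded; combined with $f_n\to f$ and the closedness of $W$, a standard argument shows $f\in\mathrm{Dom}(W)$ and $Wf_n\to Wf$ (at least along a subsequence, upgraded to the full sequence by uniqueness of limits), whence $\|Wf\|_p\le\liminf\omega^k\|f_n\|_p=\omega^k\|f\|_p$. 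Applying this to every word simultaneously gives $f\in E_p^\omega(\mathbb{V})$.

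The main obstacle is the closedness step: one must justify passing the inequality through the limit for the higher-order operators $W=V_{i_1}\cdots V_{i_k}$, which requires knowing that these (generally unbounded) differential operators are closed on $L_p(X)$ and that boundedness of $\{Wf_n\}$ together with convergence of $\{f_n\}$ forces convergence of $\{Wf_n\}$ to $Wf$. The cleanest route is to avoid invoking closedness of each $W$ directly and instead argue through the entire-function description: for each $h$, $j$, and word, $\langle h,e^{tV_j}Wf_n\rangle\to\langle h,e^{tV_j}Wf\rangle$ pointwise in $t$ once one controls the convergence, and a normal-families/Vitali argument shows the limit is again entire of exponential type $\le\omega$ (the uniform bound (2.6) is inherited in the limit). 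Either way, the uniform Bernstein bound is what guarantees the limiting object stays in the class, and I expect the bookkeeping of the operator-limit interchange to be the only genuinely technical point.
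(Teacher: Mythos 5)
Your plan for parts (2) and (3) follows the paper's general strategy (linearity via the manifestly linear description $\mathbb{E}^{\omega}_{p}(\mathbb{V})$, closedness via a limiting argument), but there are two genuine gaps, and the first one sits exactly where the real content of the theorem is. For part (1), writing $g=V_{\nu}f$, the direct application of (2.2) gives only $\|V_{i_{1}}\cdots V_{i_{k}}g\|_{p}\leq\omega^{k+1}\|f\|_{p}=\omega^{k}\left(\omega\|f\|_{p}\right)$; since $\|g\|_{p}\leq\omega\|f\|_{p}$ with the inequality going the wrong way, this is strictly weaker than the bound $\omega^{k}\|g\|_{p}$ that Definition 2.1 demands, and saying that ``replacing $f$ by $V_{j}f$ only shifts the exponent'' does not close the gap. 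The paper's proof supplies the missing step: the crude bound $\omega^{k+1}\|f\|_{p}$ is used only to certify that $F(z)=\langle h,e^{zV_{j}}g\rangle$ is entire of exponential type $\omega$ (the multiplicative constant is irrelevant for the type), while the isometry of $e^{tV_{j}}$ gives the sharp bound $|F(t)|\leq\|h\|\,\|g\|_{p}$ on the real axis; the classical Bernstein inequality for entire functions of exponential type bounded on $\mathbb{R}$ then yields $|\langle h,V_{j}^{k}g\rangle|=|F^{(k)}(0)|\leq\omega^{k}\|h\|\,\|g\|_{p}$, and a norming functional converts this into $\|V_{j}^{k}g\|_{p}\leq\omega^{k}\|g\|_{p}$, after which one iterates. (Alternatively, invariance is immediate from Definition 2.2 via Theorem 2.1, because every condition defining membership of $V_{\nu}f$ in $\mathbb{E}^{\omega}_{p}(\mathbb{V})$ is among the conditions defining membership of $f$; you announced this kind of transfer as your strategy but did not actually use it for part (1).)

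For part (3), your assertion that boundedness of $\{Wf_{n}\}$ together with closedness of $W$ and $f_{n}\to f$ forces $Wf_{n}\to Wf$ is false as stated: a closed operator requires convergence of both coordinates of the graph, and the weak-compactness route you hint at (subsequences plus uniqueness of limits) is unavailable for $p=1$ and $p=\infty$, both of which the theorem covers. The correct and simpler argument, which is the paper's, uses linearity (part (2)) first: $f_{n}-f_{m}\in E^{\omega}_{p}(\mathbb{V})$, so $\|V_{j}(f_{n}-f_{m})\|_{p}\leq\omega\|f_{n}-f_{m}\|_{p}\to 0$, hence $\{V_{j}f_{n}\}$ is Cauchy in $L_{p}(X)$; closedness of $V_{j}$ then identifies its limit as $V_{j}f$ and gives $\|V_{j}f\|_{p}\leq\omega\|f\|_{p}$, and one repeats the argument for longer words. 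Your Vitali/normal-families alternative is too vague to evaluate, since it still presupposes control of $Wf_{n}$.
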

\begin{proof}We have to show that if $f\in
E^{\omega}_{p}(\mathbb{V})$ then for any $1\leq
i_{1},i_{2}...,i_{k}, \nu\leq d $ the inequality
\begin{equation}
\|V_{i_{1}}...V_{i_{k}}g\|_{p}\leq \omega^{k}\|g\|_{p},
g=V_{\nu}f,
\end{equation}
holds true. If $f\in E^{\omega}_{p}(\mathbb{V})$, then for any
$V_{\nu}, V_{j}$ and $g=V_{\nu}f$ the inequality
\begin{equation}
\|V_{j}^{k}g\|_{p}\leq\omega^{k+1}\|f\|_{p}=
\omega^{k}\left(\omega\|f\|_{p}\right), k\in \mathbb{N},
\end{equation}
takes place. But then  for any $z\in \mathbb{C}$ we have

$$ \left\|e^{zV_{j}}g\right\|_{p}=
\left\|\sum
^{\infty}_{r=0}\left(z^{r}V_{j}^{r}g\right)/r!\right\|_{p} \leq
\omega\|f\|_{p}\sum ^{\infty}_{r=0}\frac{|z|^{r}\omega^{r}}{r!}=
\omega e^{|z|\omega}\|f\|_{p}.
$$

 As in the proof of the Theorem 2.1  it implies that for any
functional $h$ on $ L_{p}(X), 1\leq p\leq \infty,$ the scalar
function
$$
F(z)=\left<h, e^{zV_{j}}g\right>
$$
is an entire function
 of exponential type $\sigma$ which is bounded on the real axis  $\mathbb{R}^{1}$
by the constant $\|h\| \|g\|_{p}$. An application of the Bernstein
inequality gives the inequality

$$\left\|\left<h, e^{tV_{j}}V_{j}^{k}g\right>\right\|_{C(R^{1})}=
\left \|\left(\frac{d}{dt}\right)^{k}\left<h,
e^{tV_{j}}g\right>\right \|_{ C(R^{1})}
\leq\omega^{k}\|h\|\|g\|_{p}
$$
which leads (see the proof of the Theorem 2.1) to the inequality

\begin{equation}
\|V_{j}^{k}g\|_{p}\leq \omega^{k}\|g\|_{p}, k\in \mathbb{N}.
\end{equation}
It is clear that by repeating these arguments we can prove the
inequality (2.8). The first part of the Theorem  is proved. The
second follows from the fact that the set $
\mathbb{E}^{\omega}_{p}(\mathbb{V}), \omega>0, $ is obviously
linear.

Next, assume that  a sequence $f_{n}\in
E^{\omega}_{p}(\mathbb{V})$ converges in $L_{p}(X)$ to a function
$f$. Because of the Bernstein inequality for any  $1\leq j\leq d$
the sequence $V_{j}f_{n}$ will be fundamental in $L_{p}(X)$. Since
the operator $V_{j}$ is closed the limit of the sequence
$V_{j}f_{n}$ will be the function $V_{j}f$, which implies the
inequality
$$
\|V_{j}f\|_{p}\leq \omega\|f\|_{p}.
$$
By repeating these arguments we can show that if a sequence
$f_{n}\in E^{\omega}_{p}(\mathbb{V})$ converges in $L_{p}(X)$ to a
function $f$ then the Bernstein inequality (2.8) for $f$ holds
true. The Theorem is proved.

\end{proof}

\section{ Paley-Wiener spaces of functions $ PW_{\omega}(X)$ in $L_{2}(X)$}

Let $\textbf{a}^{*}$ be the real dual of $\textbf{a}$ and $W$ be
the Weyl's group. The $\Sigma$ will be the set of all bounded
roots, and $\Sigma^{+}$ will be the set of all positive bounded
roots. The notation $\textbf{a}^{+}$ has the meaning $
\textbf{a}^{+}=\{h\in \textbf{a}|\alpha(h)>0, \alpha\in
\Sigma^{+}\} $
 and is known as positive Weyl's chamber. Let $\rho\in
 \textbf{a}^{*}$ is defined in a way that $2\rho$ is the sum of
 all  positive bounded roots. The Killing form $\left<,\right>$ on $\textbf{g}$
 defines a
 metric on $\textbf{a}$. By duality it defines a scalar product on
 $\textbf{a}^{*}$. The $\textbf{a}^{*}_{+}$ is the set of
 $\lambda\in \textbf{a}^{*}$, whose dual belongs to
 $\textbf{a}^{+}$.

According to Iwasawa decomposition for every $g\in G$ there exists
a unique $A(g)\in \textbf{a}$ such that $g=n \exp A(g) k, k\in K,
n\in N, $
 where $\exp :\textbf{a}\rightarrow A$ is the exponential map of
 the
 Lie algebra $\textbf{a}$ to Lie group $A$. On the direct product
 $X\times \mathcal{B}$ we introduce function with values in $\textbf{a}$
 using the formula $A(x,b)=A(u^{-1}g)$ where $x=gK, g\in G, b=uM, u\in K$.

For every $f\in C_{0}^{\infty}(X)$ the Helgason-Fourier transform
is defined by the formula
$$
\hat{f}(\lambda,b)=\int_{X}f(x)e^{(-i\lambda+\rho)A(x,b))}dx,
$$
where $ \lambda\in \textbf{a}^{*}, b\in \mathcal{B}=K/M, $ and
$dx$ is a $G$-invariant measure on $X$. This integral can also be
expressed as an integral over group $G$. Namely, if $b=uM,u\in K$,
then
\begin{equation}
\hat{f}(\lambda,b)=\int_{G}f(x)e^{(-i\lambda+\rho)A(u^{-1}g))}dg.
\end{equation}
The following inversion formula holds true
\begin{equation}
f(x)=w^{-1}\int_{\textbf{a}^{*}\times
\mathcal{B}}\hat{f}(\lambda,b)e^{(-i\lambda+\rho)(A(x,b))}|c(\lambda)|^{-2}d\lambda
db,
\end{equation}
where $w$ is the order of the Weyl's group and $c(\lambda)$ is the
Harish-Chandra's function, $d\lambda$ is the Euclidean measure on
$\textbf{a}^{*}$ and $db$ is the normalized $K$-invariant measure
on $\mathcal{B}$. This transform can be extended to an isomorphism
between spaces $L_{2}(X,dx)$ and $L_{2}(\textbf{a}^{*}_{+}\times
\mathcal{B}, |c(\lambda)|^{-2}d\lambda db)$ and the Plancherel
formula holds true
\begin{equation}
\|f\|=\left( \int_{\textbf{a}^{*}_{+}\times \mathcal{B}}|\hat{f}
(\lambda,b)|^{2}|c(\lambda)|^{-2}d\lambda db\right)^{1/2}.
\end{equation}

An analog of the Paley-Wiener Theorem hods true that says in
particular that a Helgason-Fourier transform of a compactly
supported distribution is a function which is analytic in
$\lambda$.

It is known, that
\begin{equation}
\widehat{\Delta
f}(\lambda,b)=-(\|\lambda\|^{2}+\|\rho\|^{2})\hat{f}(\lambda,b),f\in
C^{\infty}_{0}(X),
\end{equation}
where $\|\lambda\|^{2}=<\lambda,\lambda>,\|\rho\|^{2}=<\rho,\rho>,
<,>$ is the Killing form on $\textbf{a}^{*}$.

In the case $p=2$ we can introduce  the Paley-Wiener spaces
$PW_{\omega}(X)$ which depend just on the symmetric space $X$.

\begin{defn}  In what follows by  the Paley-Wiener space
$PW_{\omega}(X)$ we understand  the space of all functions $f\in
L_{2}(X)$ whose Helgason-Fourier transform has support in the set
$\left(\textbf{a}^{*}_{+}\right)_{\omega}\times \mathcal{B}$,
where
\begin{equation}
\left(\textbf{a}^{*}_{+}\right)_{\omega}=\left\{\lambda\in
\textbf{a}^{*}_{+}:<\lambda,\lambda>^{1/2}=\|\lambda\|\leq
\omega\right\},\omega\geq 0,
\end{equation}
and $<.,.>$ is the Killing form on $\textbf{a}^{*}$.
\end{defn}

The next theorem is evident.

\begin{thm} The following statements hold true:

\bigskip

1) the set $\bigcup _{ \omega >0}PW_{\omega }(X)$ is dense in
$L_{2}(X)$;

\bigskip

2) the $PW_{\omega }(X)$ is a linear closed subspace in
$L_{2}(X)$.
\end{thm}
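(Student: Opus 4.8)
The plan is to transfer both statements to the Fourier side through the Plancherel isomorphism and then invoke only elementary properties of $L_{2}$ spaces. Recall from (3.3) that the Helgason-Fourier transform $f \mapsto \hat{f}$ is an isometric isomorphism of $L_{2}(X,dx)$ onto $L_{2}(\textbf{a}^{*}_{+}\times \mathcal{B}, |c(\lambda)|^{-2}d\lambda\, db)$. Under this isomorphism the space $PW_{\omega}(X)$ corresponds exactly to the set of those $\varphi \in L_{2}(\textbf{a}^{*}_{+}\times \mathcal{B})$ that vanish (almost everywhere) outside the measurable set $(\textbf{a}^{*}_{+})_{\omega}\times \mathcal{B}$ defined in (3.5). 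Everything then reduces to understanding this set of functions.

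For the second statement, I would argue as follows. Write $S = (\textbf{a}^{*}_{+})_{\omega}\times \mathcal{B}$ and let $\mathbf{1}_{S^{c}}$ be the indicator of its complement. The multiplication operator $\varphi \mapsto \mathbf{1}_{S^{c}}\varphi$ is bounded (of norm $\leq 1$) on $L_{2}(\textbf{a}^{*}_{+}\times \mathcal{B})$, and the Fourier image of $PW_{\omega}(X)$ is precisely its kernel. Since the kernel of a bounded linear operator is always a linear closed subspace, pulling it back through the isometric isomorphism shows that $PW_{\omega}(X)$ is a linear closed subspace of $L_{2}(X)$.

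For the first statement, I would establish density on the Fourier side. Given any $\varphi \in L_{2}(\textbf{a}^{*}_{+}\times \mathcal{B})$, consider its truncations $\varphi_{\omega} = \mathbf{1}_{S}\varphi$. Each $\varphi_{\omega}$ lies in the Fourier image of $PW_{\omega}(X)$, and since the sets $(\textbf{a}^{*}_{+})_{\omega}$ exhaust $\textbf{a}^{*}_{+}$ as $\omega \to \infty$, dominated convergence gives
$$
\|\varphi - \varphi_{\omega}\|^{2} = \int_{\{\|\lambda\|>\omega\}\times \mathcal{B}} |\varphi(\lambda,b)|^{2}|c(\lambda)|^{-2}\,d\lambda\, db \longrightarrow 0, \quad \omega \to \infty.
$$
Pulling this back by the isometry, every $f \in L_{2}(X)$ is approximated in norm by elements of the spaces $PW_{\omega}(X)$, so $\bigcup_{\omega>0}PW_{\omega}(X)$ is dense in $L_{2}(X)$.

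I do not expect a genuine obstacle here; as the author notes, the statement is essentially a restatement of the fact that the Plancherel isomorphism identifies $PW_{\omega}(X)$ with the $L_{2}$-functions supported in a fixed measurable set. The only point requiring a little care is that the support condition --- vanishing almost everywhere outside $(\textbf{a}^{*}_{+})_{\omega}\times \mathcal{B}$ --- is stable under $L_{2}$ limits, which is exactly what the kernel description in the second part makes precise.
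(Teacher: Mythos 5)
Your proof is correct, and it is precisely the argument the paper has in mind: the author states the theorem without proof ("The next theorem is evident"), the evident reason being exactly your observation that under the Plancherel isomorphism (3.3) the space $PW_{\omega}(X)$ becomes the set of $L_{2}$-functions vanishing a.e.\ off $(\textbf{a}^{*}_{+})_{\omega}\times\mathcal{B}$, which is a closed subspace (kernel of a bounded multiplication operator), and whose union over $\omega$ is dense by truncation. Your write-up supplies the details the paper omits; no gap.
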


We have the following Theorem in which we use notation
$\rho\in\textbf{a}^{*}$ for the half-sum of
 the  positive bounded roots.
\begin{thm}
A function $f$ belongs to $PW_{\omega}(X)$ if and only if
\begin{equation}
\|D^{s}f\|_{2}\leq
\left(\omega^{2}+\|\rho\|^{2}\right)^{s/2}\|f\|_{2}.
\end{equation}
where $D$ is the positive square root from the Laplace-Beltrami
operator $\Delta$, $D=\Delta^{1/2}$.
\end{thm}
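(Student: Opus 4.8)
The plan is to exploit the diagonalization of $\Delta$ under the Helgason-Fourier transform, just as one would use the ordinary Fourier transform to characterize Paley-Wiener functions on $\mathbb{R}^d$. Since $\Delta=\Delta^{1/2}\cdot\Delta^{1/2}=D^2$ is self-adjoint with spectral representation given by (3.4), namely $\widehat{\Delta f}(\lambda,b)=-(\|\lambda\|^2+\|\rho\|^2)\hat f(\lambda,b)$, the spectrum of $-\Delta$ is $[\|\rho\|^2,\infty)$ and the operator $D$ acts on the Fourier side as multiplication by $(\|\lambda\|^2+\|\rho\|^2)^{1/2}$. The first step is therefore to record that for $f$ in the domain of $D^s$ one has, by the Plancherel formula (3.3),
\begin{equation}
\|D^s f\|_2^2=\int_{(\textbf{a}^*_+)\times\mathcal{B}}\left(\|\lambda\|^2+\|\rho\|^2\right)^s|\hat f(\lambda,b)|^2|c(\lambda)|^{-2}\,d\lambda\,db.
\end{equation}

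For the forward direction, suppose $f\in PW_\omega(X)$. Then $\hat f$ is supported in $(\textbf{a}^*_+)_\omega\times\mathcal{B}$, so on the support we have $\|\lambda\|\le\omega$ and hence the weight satisfies $(\|\lambda\|^2+\|\rho\|^2)^s\le(\omega^2+\|\rho\|^2)^s$. Pulling this bound out of the integral in the displayed identity and recognizing the remaining integral as $\|f\|_2^2$ via (3.3) immediately yields the Bernstein inequality $\|D^sf\|_2\le(\omega^2+\|\rho\|^2)^{s/2}\|f\|_2$. This half is essentially a one-line spectral estimate once the Plancherel identity is in place.

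The converse is the more delicate direction and is where I expect the real work to lie. Assume (3.7) holds for all $s$; I must show that $\hat f$ vanishes outside $(\textbf{a}^*_+)_\omega\times\mathcal{B}$. The strategy is to argue by contradiction: if $\hat f$ has positive-measure support in the region $\|\lambda\|>\omega$, then on a set of the form $\{\|\lambda\|\ge\omega+\varepsilon\}$ the weight $(\|\lambda\|^2+\|\rho\|^2)^s$ grows like $((\omega+\varepsilon)^2+\|\rho\|^2)^s$, which exceeds $(\omega^2+\|\rho\|^2)^s$ by a factor that tends to infinity as $s\to\infty$. Feeding this into the identity for $\|D^sf\|_2^2$ forces $\|D^sf\|_2/(\omega^2+\|\rho\|^2)^{s/2}\|f\|_2$ to blow up, contradicting (3.7). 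More cleanly, one can take $s$-th roots: the hypothesis gives $\|D^sf\|_2^{1/s}\le(\omega^2+\|\rho\|^2)^{1/2}\|f\|_2^{1/s}$, and letting $s\to\infty$ the left side converges to the essential supremum of $(\|\lambda\|^2+\|\rho\|^2)^{1/2}$ over the support of $\hat f$ (this is the standard $L^p\to L^\infty$ limit of spectral moments). Hence that essential supremum is at most $(\omega^2+\|\rho\|^2)^{1/2}$, which is exactly the statement that $\|\lambda\|\le\omega$ a.e.\ on the support, i.e. $f\in PW_\omega(X)$.

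The main obstacle to make rigorous is the limiting argument in the converse: justifying that $\lim_{s\to\infty}\|D^sf\|_2^{1/s}$ equals the essential supremum of the spectral multiplier on the support of $\hat f$ requires care about whether $f$ lies in the domain of $D^s$ for all $s$ (which the hypothesis (3.7) itself guarantees, since the right-hand side is finite) and a standard measure-theoretic lemma that $\left(\int g^s\,d\mu\right)^{1/s}\to\|g\|_{L^\infty(\mu)}$ for a nonnegative measurable $g$ against the finite measure $d\mu=|\hat f(\lambda,b)|^2|c(\lambda)|^{-2}\,d\lambda\,db$. Everything else is bookkeeping with the Plancherel identity (3.3) and the spectral relation (3.4).
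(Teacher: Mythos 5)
Your proposal is correct and follows essentially the same route as the paper: the forward direction is the Plancherel identity plus the pointwise bound $(\|\lambda\|^{2}+\|\rho\|^{2})^{s}\leq(\omega^{2}+\|\rho\|^{2})^{s}$ on the support of $\hat f$, and the converse compares the growth of the spectral weight on $\{\|\lambda\|\geq\omega+\varepsilon\}$ against the Bernstein bound and lets the power tend to infinity, which is exactly the paper's estimate with the ratio $\bigl((\omega^{2}+\|\rho\|^{2})/((\omega+\varepsilon)^{2}+\|\rho\|^{2})\bigr)^{2\sigma}\to 0$. Your alternative phrasing via $\lim_{s\to\infty}\|D^{s}f\|_{2}^{1/s}$ is the same estimate repackaged (it appears in the paper as Corollary 3.1).
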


\begin{proof}

By using the Plancherel formula and (2.8) we obtain that for every
$\omega$-
  band limited function
  $$
\|\Delta^{\sigma}f\|^{2}=\int_{(\textbf{a}^{*}_{+})_{\omega}}\int_{B}(\|\lambda\|^{2}+\|\rho\|^{2})
^{\sigma} |\widehat{f}(\lambda,b)|^{2}|c(\lambda)|^{2}
dbd\lambda\leq
$$
$$
 (\omega^{2}+\|\rho\|^{2})^{\sigma}
\int_{\textbf{a}^{*}}\int_{B}|\widehat{f}(\lambda,b)|^{2}|c(\lambda)|^{2}
dbd\lambda=(\omega^{2}+\|\rho\|^{2})^{\sigma}\|f\|^{2}.
$$

  Conversely, if $f$ satisfies (3.1), then for any $\varepsilon>0$
  and any $\sigma >0$ we have
  $$
  \int_{\textbf{a}^{*}  \setminus (\textbf{a}^{*}_{+})_{\omega}}\int_{B}
  |\hat{f}(\lambda,b)|^{2}|c(\lambda)|^{-2} dbd\lambda\leq
  $$
  $$
  \int_{\textbf{a}^{*}  \setminus (\textbf{a}^{*}_{+})_{\omega}}\int_{B}
  (\|\lambda\|^{2}+\|\rho\|^{2})^{-2\sigma}
 ( \|\lambda\|^{2}+\|\rho\|^{2})^{2\sigma}|\hat{f}(\lambda,b)|^{2}|c(\lambda)|
 ^{-2} dbd\lambda\leq
  $$
  \begin{equation}
  \left(\frac{\omega^{2}+\|\rho\|^{2}}
  {(\omega+\varepsilon)^{2}+\|\rho\|^{2}}\right)^{2\sigma}
  \|f\|^{2}.
  \end{equation}
It means, that for any $\varepsilon>0$ the function
$\widehat{f}(\lambda,b)$ is zero on $\left\{\textbf{a}^{*}
\setminus (\textbf{a}^{*}_{+})_{\omega}\right\}\times B$. The
statement is proved.

\end{proof}

In a similar way one can prove the following Corollary.

\begin{col}The following statements hold true:

1) the norm of the operator $D=\Delta^{1/2}$ in the space
$PW_{\omega}(X)$ is exactly $\sqrt{\omega^{2}+\|\rho\|^{2}}$;

2) the following limit takes place
$$
\lim_{k\rightarrow
\infty}\|D^{k}f\|_{2}^{1/k}=\sqrt{\omega^{2}+\|\rho\|^{2}},
\>\>\>\>0<\omega<\infty,
$$
if and only if $\>\>\omega$ is the smallest number for which
$(\textbf{a}^{*}_{+})_{\omega}\times B$ contains the support of a
function $\mathcal{F}f, f \in L_{2}(X)$.

\end{col}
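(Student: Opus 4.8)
The plan is to reduce both statements to elementary multiplier facts on the Helgason-Fourier side, using the Plancherel formula (3.3) together with the eigenvalue relation (3.4). Introduce the symbol $\mu(\lambda)=\left(\|\lambda\|^{2}+\|\rho\|^{2}\right)^{1/2}$, which by (3.4) is exactly the multiplier of $D=\Delta^{1/2}$, so that $\widehat{D^{k}f}(\lambda,b)=\mu(\lambda)^{k}\hat{f}(\lambda,b)$. For a fixed $f\in L_{2}(X)$ put $d\nu(\lambda,b)=|\hat{f}(\lambda,b)|^{2}|c(\lambda)|^{-2}\,d\lambda\,db$; this is a finite measure of total mass $\|f\|_{2}^{2}$, and Plancherel gives $\|D^{k}f\|_{2}^{2}=\int\mu^{2k}\,d\nu$.

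For statement 1) the upper bound is immediate from Theorem 3.2 with $s=1$: every $f\in PW_{\omega}(X)$ satisfies $\|Df\|_{2}\leq\sqrt{\omega^{2}+\|\rho\|^{2}}\,\|f\|_{2}$, so the operator norm of $D$ on $PW_{\omega}(X)$ is at most $\sqrt{\omega^{2}+\|\rho\|^{2}}$. For the matching lower bound I would exhibit functions whose spectrum concentrates near the boundary sphere $\|\lambda\|=\omega$: choosing $f_{n}\in PW_{\omega}(X)$ with $\hat{f}_{n}$ supported in the thin shell $\{\,\omega-1/n\leq\|\lambda\|\leq\omega\,\}\times\mathcal{B}$ forces $\|Df_{n}\|_{2}/\|f_{n}\|_{2}\to\sqrt{\omega^{2}+\|\rho\|^{2}}$, since $\mu(\lambda)\to\sqrt{\omega^{2}+\|\rho\|^{2}}$ uniformly on such shells. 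This identifies the norm exactly.

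For statement 2), note that $\|D^{k}f\|_{2}^{1/k}=\left(\int\mu^{2k}\,d\nu\right)^{1/(2k)}$ is precisely the $L^{2k}(\nu)$-norm of $\mu$. The governing fact is the convergence of $L^{p}$-norms to the $L^{\infty}$-norm over a finite measure space as $p\to\infty$; hence $\lim_{k\to\infty}\|D^{k}f\|_{2}^{1/k}=\|\mu\|_{L^{\infty}(\nu)}$, the $\nu$-essential supremum of $\mu$. Since $\mu$ is strictly increasing in $\|\lambda\|$ and $|c(\lambda)|^{-2}>0$ almost everywhere, this essential supremum equals $\left(\Lambda^{2}+\|\rho\|^{2}\right)^{1/2}$, where $\Lambda$ is the $\nu$-essential supremum of $\|\lambda\|$ over the support of $\hat{f}$. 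The equivalence then follows at once: the limit equals $\sqrt{\omega^{2}+\|\rho\|^{2}}$ precisely when $\Lambda=\omega$, which is exactly the statement that $\omega$ is the smallest radius for which $(\textbf{a}^{*}_{+})_{\omega}\times\mathcal{B}$ contains the support of $\mathcal{F}f$ (up to a Lebesgue-null set).

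The main obstacle I anticipate is the bookkeeping around essential versus topological support: one must check that the Harish-Chandra weight $|c(\lambda)|^{-2}$, being positive almost everywhere, neither inflates nor deflates the essential supremum of $\mu$, so that the $\nu$-essential support of $\hat{f}$ agrees with its closed support up to null sets and $\Lambda$ is genuinely the smallest band radius. The $L^{p}\to L^{\infty}$ convergence itself is routine once the problem is cast in the form above (and when $\mu\notin L^{\infty}(\nu)$ the norms diverge, matching $\Lambda=\infty$ and excluding the finite-$\omega$ case), but verifying both directions of the ``if and only if'' requires this control over where the spectral mass of $f$ actually sits.
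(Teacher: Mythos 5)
Your argument is correct and follows the same route the paper intends: the paper gives no explicit proof of this corollary, stating only that it is proved ``in a similar way'' to Theorem 3.2, i.e., by passing to the Helgason--Fourier side and using the Plancherel formula, which is exactly what you do. Your reduction of part 2) to the convergence of $L^{2k}(\nu)$-norms to the $L^{\infty}(\nu)$-norm over the finite spectral measure $\nu$, and of part 1) to spectral concentration near the shell $\|\lambda\|=\omega$, is the standard and correct way to fill in that omitted argument.
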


 In particular, we
have the following property.

\begin{col}

If a function $f$ belongs to the space $ PW_{\omega}(X)$ then for
any vector fields $V_{i_{1}},...,V_{i_{k}}, V_{j}, 1\leq
i_{1},...,i_{k}, j\leq d$ and any $h\in L_{2}(X)$ the function
\begin{equation}
\int_{X}V_{i_{1}}...V_{i_{k}}f(\exp zV_{j}\cdot
x)\overline{h(x)}dx: \mathbb{C}\rightarrow \mathbb{C}
\end{equation}
is entire function of the exponential type $\leq
\Omega=\sqrt{\omega^{2}+\|\rho\|^{2}}$ which is bounded on the
real line. Conversely, if the function (3.10) is an entire
function of the exponential type
$$
\frac{\Omega}{\sqrt{d}}=\frac{\sqrt{\omega^{2}+\|\rho\|^{2}}}{\sqrt{d}}
$$
for any $V_{i_{1}},...,V_{i_{k}}, V_{j}, 1\leq i_{1},...,i_{k},
j\leq d$ and any $h\in L_{2}(X)$, then $f\in PW_{\omega}(X)$.

\end{col}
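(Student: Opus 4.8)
The plan is to reduce the corollary to the chain of continuous embeddings
\[
E_2^{\Omega/\sqrt d}(\mathbb V)\subset PW_\omega(X)\subset E_2^{\Omega}(\mathbb V),\qquad \Omega=\sqrt{\omega^2+\|\rho\|^2},
\]
and then to invoke Theorem 2.1. For $p=2$ I identify $h\in L_2(X)$ with the functional $g\mapsto\int_X g(x)\overline{h(x)}\,dx$ and note that $e^{zV_j}g(x)=g(\exp(zV_j)\cdot x)$, so the function (3.10) is precisely $\langle h,e^{zV_j}V_{i_1}\cdots V_{i_k}f\rangle$. By the definition of $\mathbb E_2^{\sigma}(\mathbb V)$ and Theorem 2.1, the assertion that (3.10) is entire of exponential type $\le\sigma$ and bounded on $\mathbb R$ for every admissible choice of indices and every $h$ is equivalent to $f\in\mathbb E_2^{\sigma}(\mathbb V)=E_2^{\sigma}(\mathbb V)$. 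Hence the direct statement is exactly $PW_\omega(X)\subset E_2^{\Omega}(\mathbb V)$ and the converse is exactly $E_2^{\Omega/\sqrt d}(\mathbb V)\subset PW_\omega(X)$.

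For the direct inclusion I would use that the $V_j$ are skew-symmetric on $L_2(X)$ together with the identity $-(V_1^2+\cdots+V_d^2)=\Delta$, where $\Delta=D^2$ is the positive operator of Theorem 3.2 whose Helgason symbol is $\|\lambda\|^2+\|\rho\|^2$. For $f\in PW_\omega(X)$ this gives
\[
\sum_{j=1}^d\|V_jf\|_2^2=\sum_{j=1}^d\langle -V_j^2 f,f\rangle=\langle\Delta f,f\rangle=\|\Delta^{1/2}f\|_2^2\le\Omega^2\|f\|_2^2,
\]
the last step being Theorem 3.2 with $s=1$. In particular $\|V_jf\|_2\le\Omega\|f\|_2$ for each $j$. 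Since $\Delta$ commutes with every $V_j$, each $V_j$ leaves the spectral subspace $PW_\omega(X)$ invariant; thus $V_{i_2}\cdots V_{i_k}f$ again lies in $PW_\omega(X)$ and the one-step estimate iterates to the Bernstein inequality $\|V_{i_1}\cdots V_{i_k}f\|_2\le\Omega^k\|f\|_2$, i.e. $f\in E_2^{\Omega}(\mathbb V)$.

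For the converse inclusion, assume $f\in E_2^{\Omega/\sqrt d}(\mathbb V)$, so that $\|V_{i_1}\cdots V_{i_k}f\|_2\le(\Omega/\sqrt d)^k\|f\|_2$. Expanding the integer power
\[
\Delta^m=(-1)^m\Big(\sum_{j=1}^d V_j^2\Big)^m=(-1)^m\sum_{j_1,\dots,j_m}V_{j_1}^2\cdots V_{j_m}^2
\]
into its $d^m$ monomials, each a product of $2m$ fields, and applying the Bernstein bound to every monomial, the triangle inequality yields
\[
\|\Delta^m f\|_2\le d^m\Big(\frac{\Omega}{\sqrt d}\Big)^{2m}\|f\|_2=\Omega^{2m}\|f\|_2,\qquad m\in\mathbb N .
\]
Feeding this into the tail estimate from the proof of Theorem 3.2 (with the integer exponent $m$ in place of $\sigma$) bounds the integral of $|\widehat f(\lambda,b)|^2|c(\lambda)|^{-2}$ over $\{\|\lambda\|\ge\omega+\varepsilon\}\times\mathcal B$ by $\big((\omega^2+\|\rho\|^2)/((\omega+\varepsilon)^2+\|\rho\|^2)\big)^{2m}\|f\|_2^2$, which tends to $0$ as $m\to\infty$. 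Hence $\widehat f$ is supported in $\{\|\lambda\|\le\omega\}$ and $f\in PW_\omega(X)$.

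The crux, and the step I expect to be the main obstacle, is the structural identity $-(V_1^2+\cdots+V_d^2)=\Delta$ on $L_2(X)$. It rests on identifying the Casimir element (1.4) with the Laplace--Beltrami operator on $X=G/K$ and, in particular, on verifying that in this identification the compact directions $V_{d+1},\dots,V_n$ do not contribute to the operator governing the Bernstein inequality, and that the spectral normalization is the one for which the symbol is $\|\lambda\|^2+\|\rho\|^2$, so that the bound $\|Df\|_2\le\Omega\|f\|_2$ of Theorem 3.2 carries the $\|\rho\|^2$-shift. Once this identity and the commutation $\Delta V_j=V_j\Delta$ are in hand, everything else is routine: skew-symmetry for the direct inclusion, and the $d^m$-versus-$(\sqrt d)^{2m}$ balance for the converse, the latter being the sole reason the exponential type $\Omega/\sqrt d$ rather than $\Omega$ is imposed in the hypothesis of the converse.
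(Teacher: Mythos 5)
Your overall strategy --- reducing the Corollary to the pair of embeddings $E_2^{\Omega/\sqrt d}(\mathbb{V})\subset PW_\omega(X)\subset E_2^{\Omega}(\mathbb{V})$ and invoking Theorem 2.1 to translate membership in $E_2^{\sigma}(\mathbb{V})$ into the entire/exponential-type statement for the functions (3.10) --- is exactly the route the paper intends: it is announced in the Introduction, and the supporting embeddings are the content of Lemma 3.4. Your converse half also mirrors the paper's mechanics: expand powers of the sum-of-squares operator into $d^m$ ordered monomials and feed the resulting bound $\|\Delta^m f\|\le \Omega^{2m}\|f\|$ into the tail estimate from the proof of Theorem 3.2.

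The gap is the step you yourself single out as the crux: the identity $-(V_1^2+\cdots+V_d^2)=\Delta$ on $L_2(X)$. As defined in the paper, the $V_j$ are the images of a basis of $\mathbf{g}$ under the differential of the quasi-regular representation, i.e.\ the fundamental vector fields of the $G$-action, $V_jf(x)=\frac{d}{dt}f(\exp tV_j\cdot x)|_{t=0}$. In this realization the compact directions $V_{d+1},\dots,V_n$ do \emph{not} act by zero on functions on $X$ (they vanish only at the origin $o$), so the Casimir identity does not collapse to the sum over the first $d$ indices; moreover $\sum_{j\le d}V_j^2$ is only $K$-invariant, not $G$-invariant, hence cannot coincide with the $G$-invariant operator $\Delta$ away from $o$, and even the quadratic-form version $\sum_j|V_jf(x)|^2=|\nabla f(x)|^2$ fails off the origin. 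Both halves of your argument rest on this identity, so as written neither embedding is established. The paper circumvents precisely this difficulty in Lemma 3.4: instead of an exact identity it uses the elliptic a priori estimates $\|V_jf\|\le A(\|f\|+\|\Delta^{1/2}f\|)$ and $\|\Delta f\|\le B(\|f\|+\|Lf\|)$ with $L=-\sum_jV_j^2$, iterated via the commutation of $\Delta$ with the $V_j$. The price is that the paper only obtains $PW_{\omega}(X)\subset E_{a\Omega}$ and $E_{\omega/\sqrt d}\subset PW_{b\omega}$ with unspecified constants $a(X)$, $b(X)$ --- so the sharp types $\Omega$ and $\Omega/\sqrt d$ asserted in the Corollary are not actually delivered by the paper's own lemma either; but your argument, which would yield the sharp constants, depends on an identity that does not hold. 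To repair your proof you would either have to adopt the paper's a priori-estimate route (and accept the constants $a$, $b$), or work with genuinely $G$-invariant operators for which an exact Casimir identity is available.
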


The next Lemma describes relations between spaces $PW_{\omega}(X)$ and $E_{\nu}(\mathbb{D})$.  In what follows we assume that $\sqrt{\omega^{2}+\|\rho\|^{2}}>1$.

\begin{lem}\label{inclusion-0} The following statements  hold
\begin{enumerate}

\item there exists a constant $a=a(X)$ such that 
\begin{equation}\label{item1}
PW_{\omega}(X)\subset
E_{a\Omega}(\mathbb{D}),\>\>\Omega=\sqrt{\omega^{2}+\|\rho\|^{2}};
\end{equation}

\item  there exists a constant $b=b(X)$ such that such that 
\begin{equation}\label{item2}
E_{\omega/\sqrt{d}}(\mathbb{D})\subset  PW_{b\omega}(X).
\end{equation}

\end{enumerate}
\end{lem}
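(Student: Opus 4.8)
The plan is to route both inclusions through the operator $D=\Delta^{1/2}$ and to invoke the Bernstein characterization of $PW_{\omega}(X)$ furnished by Theorem 3.2. The structural fact I would use throughout is that on functions over $X$ the Casimir operator reduces to the Laplace--Beltrami operator: there is a constant $c=c(X)>0$ (coming from the normalization between the Killing form and the invariant metric defining $\Delta$) with $-\Delta=c\sum_{j=1}^{d}V_{j}^{2}=c\sum_{j=1}^{d}V_{j}^{*}V_{j}$ on $L_{2}(X)$, the compact-direction generators $V_{d+1},\dots,V_{n}$ contributing nothing on functions lifted from $X$. Here each $V_{j}$ is skew-adjoint, so $-V_{j}^{2}=V_{j}^{*}V_{j}\ge 0$, and, as already noted, every $V_{j}$ commutes with $\Delta$ and hence with $D$ and with the spectral projections of $\Delta$. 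On band-limited functions and on functions satisfying the Bernstein inequality everything is smooth and lies in the relevant domains, so all the manipulations below are legitimate.

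For the first inclusion, take $f\in PW_{\omega}(X)$, so that $\|D^{s}f\|_{2}\le\Omega^{s}\|f\|_{2}$ by Theorem 3.2. The base estimate is $\|V_{j}g\|_{2}^{2}=\langle V_{j}^{*}V_{j}g,g\rangle\le\langle\sum_{i}V_{i}^{*}V_{i}\,g,g\rangle=c^{-1}\|Dg\|_{2}^{2}$, giving $\|V_{j}g\|_{2}\le a\|Dg\|_{2}$ with $a=c^{-1/2}$ for every smooth $g$. I would then peel off one field at a time, commuting $D$ through the remaining fields since $D$ commutes with each $V_{i}$: applying the base estimate to $g=V_{i_{2}}\cdots V_{i_{k}}f$ and the induction hypothesis to $Df$ yields $\|V_{i_{1}}\cdots V_{i_{k}}f\|_{2}\le a\|D(V_{i_{2}}\cdots V_{i_{k}}f)\|_{2}=a\|V_{i_{2}}\cdots V_{i_{k}}(Df)\|_{2}\le a^{k}\|D^{k}f\|_{2}\le(a\Omega)^{k}\|f\|_{2}$. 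Thus $f\in E_{a\Omega}(\mathbb{D})$.

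For the second inclusion, suppose $f\in E_{\omega/\sqrt{d}}(\mathbb{D})$ and expand even powers of $D$ using the identity: $\|D^{2m}f\|_{2}=\|(-\Delta)^{m}f\|_{2}=c^{m}\bigl\|\bigl(\sum_{j=1}^{d}V_{j}^{2}\bigr)^{m}f\bigr\|_{2}\le c^{m}\sum_{j_{1},\dots,j_{m}}\|V_{j_{1}}^{2}\cdots V_{j_{m}}^{2}f\|_{2}$. There are $d^{m}$ summands, each a product of $2m$ fields, so the Bernstein inequality with exponent $\omega/\sqrt{d}$ bounds each by $(\omega/\sqrt{d})^{2m}\|f\|_{2}=\omega^{2m}d^{-m}\|f\|_{2}$; the factor $d^{m}$ is exactly cancelled, leaving $\|D^{2m}f\|_{2}\le(c\omega^{2})^{m}\|f\|_{2}$. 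Writing $b=\sqrt{c}$ and using $(b\omega)^{2}\le(b\omega)^{2}+\|\rho\|^{2}$, this reads $\|D^{2m}f\|_{2}\le\bigl((b\omega)^{2}+\|\rho\|^{2}\bigr)^{(2m)/2}\|f\|_{2}$ for every $m$. Feeding these bounds (along the sequence $s=2m\to\infty$, which is all the converse part of Theorem 3.2 needs) into that characterization forces $\widehat{f}$ to vanish off $(\textbf{a}^{*}_{+})_{b\omega}\times\mathcal{B}$, i.e. $f\in PW_{b\omega}(X)$.

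The combinatorial expansion and the $\sqrt{d}$ bookkeeping are routine, the $d^{m}$ term count being precisely matched by the $(\omega/\sqrt{d})^{2m}$ decay. The main obstacle is the structural identity $-\Delta=c\sum_{j=1}^{d}V_{j}^{2}$ on $L_{2}(X)$ --- that is, verifying that the Casimir acts as the Laplacian and that the compact-direction generators drop out on functions over $X$ --- together with the secondary point of justifying rigorously, via the spectral theorem and the commutation of $V_{j}$ with $\Delta$, that $V_{j}$ commutes with the unbounded operator $D=(-\Delta)^{1/2}$. Once these are secured, both inclusions follow; I would also remark that, since Theorem 2.1 identifies membership in $E_{\nu}(\mathbb{D})$ with the orbit maps $t\mapsto\langle h,e^{tV_{j}}V_{i_{1}}\cdots V_{i_{k}}f\rangle$ being entire of exponential type $\nu$, the two inclusions are consistent with, and may alternatively be read off from, Corollary 3.2.
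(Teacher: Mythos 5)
Your argument hinges entirely on the structural identity $-\Delta=c\sum_{j=1}^{d}V_{j}^{2}$ on $L_{2}(X)$, justified by the claim that the compact-direction generators $V_{d+1},\dots,V_{n}$ ``contribute nothing on functions lifted from $X$.'' That claim is false for the operators the paper actually uses. The $V_{j}$ here are differentials of the \emph{quasi-regular} representation, $V_{j}f(x)=\frac{d}{dt}f(\exp tV_{j}\cdot x)|_{t=0}$, i.e.\ generators of \emph{left} translations. The $\mathfrak{k}$-directions annihilate right-$K$-invariant functions on $G$ only when they act as left-invariant vector fields (derivatives of right translations); acting by left translations they fix only the origin $o=eK$, so $V_{j}f\neq 0$ for $j>d$ at generic points (on $\mathbb{H}^{2}$ the $\mathfrak{so}(2)$-generator is the Killing field of rotation about $i$, which kills only functions radial about $i$). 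Consequently the Casimir identity gives $\sum_{j=1}^{d}V_{j}^{*}V_{j}=\Delta+\sum_{j=d+1}^{n}V_{j}^{*}V_{j}$ (in the convention where $\Delta\geq 0$), and the excess is a nonnegative second-order operator whose coefficients grow with distance from $o$; it is neither zero nor dominated by a constant multiple of $\Delta$. This breaks both halves of your proof: the base estimate $\|V_{j}g\|^{2}\leq c^{-1}\|Dg\|^{2}$ would require $\sum_{j}V_{j}^{*}V_{j}\leq c^{-1}\Delta$ as forms, which is the wrong direction, and the equality $\|D^{2m}f\|=c^{m}\|(\sum_{j}V_{j}^{2})^{m}f\|$ is simply not available.

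The paper's proof deliberately avoids any exact proportionality. It uses only the two a priori elliptic comparison estimates $\|D_{j}f\|\leq A(\|f\|+\|\Delta^{1/2}f\|)$ and $\|\Delta f\|\leq B(\|f\|+\|Lf\|)$ with $L=\sum_{j}D_{j}^{2}$, each carrying an $X$-dependent constant and an unavoidable zeroth-order term, and iterates them using only the fact that $\Delta$ commutes with each $V_{j}$ (they generate isometries). Those lower-order terms are precisely what produce the loss constants $a=4A$, $b=4B$ and the standing hypothesis $\sqrt{\omega^{2}+\|\rho\|^{2}}>1$; the very fact that the Lemma is stated with unspecified constants $a(X)$, $b(X)$ rather than $a=b=1$ signals that no identity of the kind you assume can be in play. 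Your combinatorial bookkeeping in the second inclusion ($d^{m}$ terms against the $(\omega/\sqrt{d})^{2m}$ decay) matches the paper's and is fine; what is missing is a correct substitute for the identity, namely the two-sided elliptic comparison between $\Delta$ and the non-invariant operator $\sum_{j=1}^{d}V_{j}^{2}$, together with the induction that tolerates the resulting lower-order terms.
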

\begin{proof} We prove (\ref{item1}). 
Let $A=A(X)$ be a constant  such that for all $f\in H^{1}(X)$ 
\begin{equation}\label{basic norms}
\|D_{j}f\|\leq A\left (\|f\|+\|\Delta^{1/2} f\|\right),\>\>\>1\leq j\leq d.
\end{equation}
Since every $D_{j}$ is a generator of an isometry of $X$ the Laplace-Beltrami operator $\Delta$ commutes with every $D_{j}$. Using (\ref{basic norms})  we obtain the following   inequality for $f\in H^{\infty}(X)$ 
$$
\|D_{j_{1}}D_{j_{2}}...D_{j_{m}}f\|\leq  A\left (\|D_{j_{2}}...D_{j_{m}}f\|+\|D_{j_{2}}...D_{j_{m}}\Delta^{1/2} f\|\right)\leq...
$$
$$\leq
A^{m}\sum_{0\leq l\leq m}C_{m}^{l}\|\Delta^{l/2}f\|\leq (2A)^{m}\sum_{0\leq l\leq m}\|\Delta^{l/2}f\|,
$$
where $C_{m}^{l}$ is the number of combinations from $m$ elements taken $l$ at a time.
Thus, if $f\in PW_{\omega}(X)$ then $\|\Delta^{s}f\|\leq (\omega^{2}+\|\rho\|^{2})^{s}\|f\|$ and
we obtain the inequality
$$
\|D_{j_{1}}...D_{j_{m}}f\|\leq(2A)^{m}\sum_{0\leq l\leq m}(\omega^{2}+\|\rho\|^{2})^{l/2}\|f\|\leq \left(a\sqrt{\omega^{2}+\|\rho\|^{2}}\right)^{m}\|f\|,
$$
where $a=4A$. The inclusion (\ref{item1}) is proved.

Now we prove (\ref{item2}). Let $B=B(X)$ be a constant such that for all $f\in H^{2}(X)$
\begin{equation}
\|\Delta f\|\leq B(\|f\|+\|Lf\|).
\end{equation}
Since  the Laplace-Beltrami operator $\Delta$ commutes with every $D_{j}$  it commutes with $L$ and we have for every $f\in H^{2k}(X)$

\begin{equation}\label{LB-Sum of squares-0}
\|\Delta^{k} f\|\leq B(\|\Delta^{k-1} f\|+\|\Delta^{k-1} Lf\|)\leq...
\leq
(2B)^{k}\sum_{0\leq l\leq k}\|L^{l}f\|.
\end{equation}
Using definition of the operator $L$  one can easily verify that for any
natural $l$  the function
$L^{l}\left(f\right)$ is a sum of $\>\>d^{l}$ terms of the following form:
\begin{equation}
D_{j_{1}}^{2}...D_{j_{l}}^{2}(f),\ 1\leq j_{1},...,j_{l}\leq d.
\end{equation}
Thus, for  $f\in E_{\omega/\sqrt{d}}(\mathbb{D})$  one has  
\begin{equation}
\|D_{i_{1}}...D_{i_{k}}f\|\leq\left(\frac{ \omega}{\sqrt{d}}\right)^{k}\|f\|,
\end{equation}
and then
\begin{equation}
\|\Delta^{k} f\|\leq 
(2B)^{k}\sum_{0\leq l\leq k}\|L^{l}f\|\leq 
$$
$$
(2B)^{k}\sum_{0\leq l\leq k}\sum_{1\leq j_{1},...,j_{l}\leq d}\|D_{j_{1}}^{2}...D_{j_{l}}^{2}(f)\| \leq \left(b(\omega^{2}+\|\rho\|^{2})\right)^{k}\|f\|,
\end{equation}
where $b=4B$.  Lemma  is proved.

\end{proof}

\section{Embedding Theorems }

 Denote by $T_{x}(X)$ the
tangent space of $X$ at a point $x\in X$ and let $ exp_ {x} $ :
$T_{x}(X)\rightarrow X$ be the exponential geodesic map i.  e.
$exp_{x}(u)=\gamma (1), u\in T_{x}(X)$ where $\gamma (t)$ is the
geodesic starting at $x$ with the initial vector $u$ : $\gamma
(0)=x , \frac{d\gamma (0)}{dt}=u.$ We will always assume that all
our local coordinates are defined by $exp$.

We consider a uniformly bounded partition of unity
$\{\varphi_{\nu}\}$ subordinate to a cover of  $X$ of finite
multiplicity
$$
X=\bigcup_{\nu} B(x_{\nu}, r),
$$
where $B(x_{\nu}, r)$ is a metric ball at  $x_{\nu}\in X$ of
radius $r$ and introduce the  Sobolev space $W^{k}_{p}(X), k\in
\mathbb{N}, 1\leq p<\infty,$ as the completion of
$C_{0}^{\infty}(X)$ with respect to the norm
\begin{equation}
\|f\|_{W^{k}_{p}(X)}=\left(\sum_{\nu}\|\varphi_{\nu}f\|^{p}
_{W^{k}_{p}(B(y_{\nu}, r))}\right) ^{1/p}.
\end{equation}

 The regularity theorem for $\Delta$ means in
particular, that the norm of the Sobolev space $W_{p}^{2k}(X),
k\in \mathbb{N}, 1\leq p< \infty,$ is equivalent to the graph norm
$\|f\|_{p}+\|\Delta^{k}f\|_{p}$.

Since  vector fields $V_{1},...,V_{d}$,  generate the tangent
space at every point of $X$ the norm of the space $W_{p}^{2k}(X),
k\in \mathbb{N}, 1\leq p< \infty,$ is equivalent to the norm
\begin{equation}
\|f\|_{p}+\sum_{j=1}^{k} \sum_{1\leq i_{1},...,i_{j}\leq
d}\|V_{i_{1}}...V_{i_{j}}f\|_{p}, 1\leq p<\infty.
\end{equation}
Using the closed graph Theorem and the fact that every $V_{i}$ is
a closed operator in $L_{p}(X), 1\leq p<\infty,$ it is easy to
show that the norm (4.2) is equivalent to the norm
\begin{equation}
\|f\|_{p}+\sum_{1\leq i_{1},..., i_{k}\leq
d}\|V_{i_{1}}...V_{i_{k}}f\|_{p}, 1\leq p<\infty.
\end{equation}

 Let $\pi :G\rightarrow X=G/K$ be the natural projection and $o\in
 X$ is the
image of identity in $G$. We consider a ball $B(o,r/4)$ in the
invariant metric on $X$. Now we choose such elements $g_{\nu}\in
G$ that the family of balls $B(x_{\nu},r/4), x_{\nu}=g_{\nu}\cdot
o,$ has the following maximal property: there is no ball in $X$ of
radius $r/4$ which would have empty intersection with every ball
from this family. Then the balls of double radius $B(x_{\nu},r/2)$
would form a cover of $X$. Of course, the balls $B(x_{\nu},r)$
will also form a cover of $X$. Let us estimate the multiplicity of
this cover.

Note, that the Riemannian volume $B(\rho)$ of a ball of radius
$\rho$ in $X$ is independent of its  center and   is given by the
formula
$$
B(\rho)=\int_{0}^{\rho}S(t)dt,
$$
where the surface area $S(t)$ of a sphere of radius $t$.

 Every ball from the family $\{B(x_{\nu}, r)\}$, that has
non-empty intersection with a particular ball $B(x_{j}, r)$ is
contained in the ball $B(x_{j}, 3r)$. Since any two balls from the
family $\{B(x_{\nu}, r/4)\}$ are disjoint, it gives the following
estimate for the index of multiplicity $N$ of the cover
$\{B(x_{\nu}, r)\}$:
$$
N\leq\frac{B(3r)}{B(r/4)}\leq\frac{\int_{0}^{3r}S(t)
dt}{\int_{0}^{r/4} S(t) dt}.
$$

By using some elementary inequalities for the function $sh$ one
can obtain the following rough estimate
$$
N\leq 12^{d}e^{\sqrt{d-1}}.
$$

 So, we proved the following Lemma.
\begin{lem}
For any   $r>0$ there
 exists a set of points $\{x_{\nu}\}$ from $X$ such that

1) balls $B(x_{\nu}, r/4)$ are disjoint,

2) balls $B(x_{\nu}, r/2)$ form a cover of $X$,

3) multiplicity of the cover by balls $B(x_{\nu}, r)$ is not
greater $N_{d}=(12)^{d}e^{\sqrt{d-1}}.$

\end{lem}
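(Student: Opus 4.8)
The final statement is Lemma 4.1, a covering lemma. Given a radius $r>0$, I must produce points $\{x_\nu\}$ in $X$ such that the balls $B(x_\nu,r/4)$ are pairwise disjoint, the balls $B(x_\nu,r/2)$ cover $X$, and the multiplicity of the cover $\{B(x_\nu,r)\}$ is bounded by the explicit constant $N_d=(12)^d e^{\sqrt{d-1}}$. The text preceding the statement essentially already carries out the construction, so my plan mirrors it.

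**The construction.** The plan is to use a maximal packing argument, which is the standard way to produce a "well-spread" net. I would consider the family of all collections of points in $X$ whose pairwise distances are at least $r/4$ (equivalently, collections whose $B(\cdot,r/8)$ balls are disjoint), and take a maximal such collection $\{x_\nu\}$ — existence follows from Zorn's lemma, or, since $X$ is separable, by a greedy selection. Maximality immediately gives property (1): the balls $B(x_\nu,r/4)$ are disjoint by construction, since any two centers are at distance $\geq r/4$ — here I realign with the paper's normalization, taking the defining separation so that the $r/4$-balls are disjoint. Property (2) is the direct consequence of maximality: if some point $y\in X$ lay outside every $B(x_\nu,r/2)$, then $y$ would have distance $>r/2$ from all centers, so $\{x_\nu\}\cup\{y\}$ would still be an admissible packing, contradicting maximality. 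Hence every point lies within $r/2$ of some center and the balls $B(x_\nu,r/2)$ cover $X$.

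**The multiplicity bound.** For property (3), I fix a ball $B(x_j,r)$ and count how many balls $B(x_\nu,r)$ meet it. If $B(x_\nu,r)\cap B(x_j,r)\neq\emptyset$, then $\mathrm{dist}(x_\nu,x_j)<2r$, so the entire ball $B(x_\nu,r/4)$ sits inside $B(x_j,3r)$. Since the balls $B(x_\nu,r/4)$ are disjoint and, by the homogeneity of $X$, all have the same volume $B(r/4)=\int_0^{r/4}S(t)\,dt$, a volume comparison gives the number of such $\nu$ as at most $B(3r)/B(r/4)=\int_0^{3r}S(t)\,dt\big/\int_0^{r/4}S(t)\,dt$. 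This is exactly the inequality displayed in the excerpt, and it uses only the center-independence of the volume, which holds because $G$ acts transitively by isometries.

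**The main obstacle.** The genuine work is in passing from the volume ratio to the clean constant $N_d=(12)^d e^{\sqrt{d-1}}$, which the paper obtains via "elementary inequalities for the function $sh$." With $S(t)=\Omega_d 2^{-b}c^{-a-b}\,\mathrm{sh}^a(ct)\,\mathrm{sh}^b(2ct)$ and $d=a+b+1$, the numerator and denominator both carry the same prefactor, so it cancels, and I must bound $\int_0^{3r}\mathrm{sh}^a(ct)\mathrm{sh}^b(2ct)\,dt$ from above and $\int_0^{r/4}\mathrm{sh}^a(ct)\mathrm{sh}^b(2ct)\,dt$ from below. The factor $12^d$ should emerge from the ratio $3r\big/(r/4)=12$ of the integration lengths (the power $d$ arising because the integrand behaves like $t^{a+b}=t^{d-1}$ for small $t$ but grows for large $t$), while the factor $e^{\sqrt{d-1}}$ absorbs the excess exponential growth $\mathrm{sh}(ct)\sim e^{ct}/2$ over the longer interval, controlled through $c=(2a+8b)^{-1/2}$ and $\sqrt{d-1}=\sqrt{a+b}$. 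I expect this to be the only nontrivial estimate, and I would bound it uniformly in $r$ so that the constant is independent of the radius, which is what the lemma requires.
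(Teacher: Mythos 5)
Your construction is exactly the paper's: take a maximal family of disjoint $r/4$-balls (equivalently a maximal $r/2$-separated set of centers), get property (2) from maximality, and bound the multiplicity by the volume comparison $N\le B(3r)/B(r/4)$ with $B(\rho)=\int_0^{\rho}S(t)\,dt$, using the center-independence of ball volumes coming from the transitive isometric $G$-action. The paper also stops at this ratio and dismisses the passage to $12^{d}e^{\sqrt{d-1}}$ with the phrase ``elementary inequalities for the function $\mathrm{sh}$,'' so up to that point you are no less complete than the source.

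The one step you defer is, however, the one that cannot be done the way you announce it: you say you ``would bound it uniformly in $r$ so that the constant is independent of the radius,'' and that is impossible. Since $S(t)\sim \mathrm{const}\cdot e^{(a+2b)ct}$ for large $t$, the ratio $B(3r)/B(r/4)$ grows like $e^{(11/4)(a+2b)cr}$ as $r\to\infty$; worse, the multiplicity itself is genuinely unbounded in $r$, because a space of exponential volume growth is not doubling, so no cleverer packing can restore a radius-independent bound. The constant $12^{d}e^{\sqrt{d-1}}$ is only credible for $r$ bounded above: for small $r$ the integrand is $\approx\Omega_{d}t^{d-1}$ and the ratio tends to the Euclidean value $12^{d}$, while the factor $e^{\sqrt{d-1}}$ (note $(a+2b)c\le\sqrt{d-1}$) can absorb the hyperbolic correction only over a bounded range of radii. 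This defect is inherited from the paper, whose statement ``for any $r>0$'' is likewise not provable by this argument; the honest fix is to restrict to $0<r\le r_{0}$ for some fixed $r_{0}=r_{0}(X)$, which is all that the later applications (where $r$ is taken of the order $1/\omega$) actually require. You should make that restriction explicit rather than promising a uniform-in-$r$ estimate that fails.
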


\begin{defn}
We will use notation $Z(x_{\nu}, r, N_{d})$ for a set of points
$\{x_{\nu}\}\in X$ which satisfies the properties 1)- 3) from the
last Lemma and we will call such set a  $(r,N_{d})$-lattice in
$X$.
\end{defn}

\begin{defn}
We will use notation $Z_{G}(g_{\nu}, r, N_{d})$ for a set of
elements $\{g_{\nu}\}$ of the group $G$  such that the points
$\{x_{\nu}=g_{\nu}\cdot o\}$ form a $(r,N_{d})$-lattice in $X$
(here $\{o\}\in X$ is the origin of $X$). Such set $Z_{G}(g_{\nu},
r, N_{d})$ will be  called a $(r,N_{d})$-lattice in $G$.
\end{defn}

\begin{thm}
For any $(r,N_{d})$-lattice $Z_{G}(g_{\nu}, r, N_{d})\subset
 G$, any $m>d/p$ there exists constants $C(X, N_{d})$ and $C(X, N_{d}, m)$
such that for
 any $\omega>0$ and any $1\leq p<q\leq \infty$ the following
 inequalities hold true
\begin{equation}
 \|f\|_{q}\leq C(X)r^{d/p} \sup_{g\in
G}\left(\sum_{i}\left(|f(g_{i}g\cdot
o)|\right)^{p}\right)^{1/p}\leq
$$
$$
C(X,m)r^{d/q-d/p}\left(1+(r\omega)^{m}\right)\|f\|_{p},
\end{equation}
for all $f\in E^{\omega}_{p}(X)$.

 In particular the following
embeddings hold true
\begin{equation}
E^{\omega}_{p}(\mathbb{V})\subset L_{q}(X),
\mathbb{V}=\left\{V_{1},...,V_{d}\right\},
\end{equation}
for any $1\leq p\leq q\leq\infty$.

\end{thm}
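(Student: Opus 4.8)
The plan is to establish the double inequality (4.4) by combining three ingredients that are already in place: the geometry of the $(r,N_d)$-lattice furnished by Lemma 4.1, a local Sobolev (Morrey) embedding $W^m_p(B)\hookrightarrow C(B)$ valid for $m>d/p$ with explicit dependence on the radius $r$, and the Bernstein inequality (2.2) that defines $E^\omega_p(\mathbb{V})$. Once (4.4) is proved, the embedding (4.5) is an immediate corollary: for $p\le q$ the right-hand side of (4.4) is finite, so $\|f\|_q<\infty$ and $f\in L_q(X)$.

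First I would record two preliminary facts. Since $G$ acts on $X$ by isometries, for every fixed $g\in G$ the translated family $\{g_ig\cdot o\}$ is again an $(r,N_d)$-lattice; in particular the balls $B(g_ig\cdot o,r/2)$ cover $X$, the balls $B(g_ig\cdot o,r/4)$ are disjoint, and the cover by $B(g_ig\cdot o,r)$ has multiplicity at most $N_d$. Second, rescaling the fixed-ball Sobolev embedding by the dilation $u\mapsto u(r\,\cdot)$, and using the norm equivalences (4.2)--(4.3) (which let the usual derivatives be replaced by the fields $V_1,\dots,V_d$), gives a constant $C=C(X,m)$ with
$$\sup_{B(x,r)}|u|\le C\sum_{k=0}^{m}r^{k-d/p}\Big(\sum_{1\le i_1,\dots,i_k\le d}\|V_{i_1}\cdots V_{i_k}u\|_{L_p(B(x,2r))}^p\Big)^{1/p}.$$

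For the right-hand inequality I would fix $g$, apply the displayed estimate at each sample point $x=g_ig\cdot o$, raise to the $p$-th power, and sum over $i$. The bounded multiplicity replaces $\sum_i\|\cdots\|^p_{L_p(B_i)}$ by $N_d\|\cdots\|^p_{L_p(X)}$; the Bernstein inequality (2.2) then gives $\|V_{i_1}\cdots V_{i_k}f\|_p\le\omega^k\|f\|_p$, and there are $d^k$ multi-indices of length $k$. Collecting terms yields $\sum_i|f(g_ig\cdot o)|^p\le Cr^{-d}\big(\sum_{k\le m}(r\omega)^{kp}d^k\big)\|f\|_p^p$, and since $t^k\le 1+t^m$ for $t\ge 0$ and $0\le k\le m$ the bracket is bounded by $C(1+(r\omega)^m)^p$. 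Taking the $p$-th root and the supremum over $g$ produces the sampled upper bound; matching the normalization of the middle term of (4.4) then reduces to bookkeeping of the powers of $r$ together with the elementary $\ell^p\hookrightarrow\ell^q$ loss, which I would track jointly with the left-hand estimate.

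The left-hand inequality is where the real difficulty lies. Covering $X$ by the balls $B(x_\nu,r/2)$ and using $|B(x_\nu,r/2)|\le Cr^d$ gives $\|f\|_q^q\le Cr^d\sum_\nu(\sup_{B(x_\nu,r/2)}|f|)^q$, and the embedding $\ell^p\hookrightarrow\ell^q$ (for $p\le q$) converts the $\ell^q$-sum into an $\ell^p$-sum, so $\|f\|_q\le Cr^{d/q}\big(\sum_\nu(\sup_{B(x_\nu,r/2)}|f|)^p\big)^{1/p}$. The crux is then to dominate this sum of \emph{continuous} local suprema by the \emph{discrete} sampled functional $\sup_{g}\sum_i|f(g_ig\cdot o)|^p$: here the freedom to translate the lattice is essential, since the supremum over $g\in G$ lets the maximizing point inside each ball be realized (or approximated) as a genuine sample point $g_ig\cdot o$, while the disjointness and multiplicity of the lattice keep the number of samples falling in any one ball bounded. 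I expect this passage from local suprema to point samples to be the main obstacle, and the place where the homogeneous structure of $X$ (and, implicitly, the band-limitedness of $f$) must be used with care; combining the two halves then gives the end Nikolskii estimate $\|f\|_q\le C(X,m)r^{d/q-d/p}(1+(r\omega)^m)\|f\|_p$, after which (4.5) follows at once.
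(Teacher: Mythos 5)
Your treatment of the right-hand inequality is essentially the paper's argument (local Sobolev pointwise estimate on each ball, summation using the bounded multiplicity $N_d$, Bernstein inequality, translation and supremum over $g$); the only deviation is that you bound each intermediate derivative directly by $\omega^j\|f\|_p$ and sum the geometric-type series, whereas the paper first uses an interpolation inequality $r^l\|V_k^lf\|_p\le a^{m-l}r^m\|V_k^mf\|_p+c_ma^{-l}\|f\|_p$ to eliminate the intermediate orders before invoking band-limitedness. For $f\in E^\omega_p(\mathbb{V})$ your shortcut is legitimate and slightly cleaner.

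The left-hand inequality, however, has a genuine gap, and the route you sketch would not close it. You propose to bound $\|f\|_q^q\le Cr^d\sum_\nu\bigl(\sup_{B(x_\nu,r/2)}|f|\bigr)^q$ and then to dominate the sum of local suprema by $\sup_{g\in G}\sum_i|f(g_ig\cdot o)|^q$, arguing that the translation freedom lets the maximizer in each ball be realized as a sample point. But the single group element $g$ moves \emph{all} sample points $g_ig\cdot o$ simultaneously and in a correlated way, so you cannot choose it to realize the maximizer in every ball at once; formally, $\sup_{B(x_\nu,r/2)}|f|^q=\sup_{g}|f(g_\nu g\cdot o)|^q$ gives $\sum_\nu\sup_g|f(g_\nu g\cdot o)|^q\ge\sup_g\sum_\nu|f(g_\nu g\cdot o)|^q$, which is the wrong direction. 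The paper avoids local suprema entirely: using $\int_Xf(x)\,dx=\int_Gf(g\cdot o)\,dg$ and $G$-invariance, it writes each $\int_{B(g_i\cdot o,r)}|f(x)|^q\,dx$ as $\int_{Q_r}|f(g_ih\cdot o)|^q\,dh$ with $Q_r=\{g\in G:g\cdot o\in B(o,r)\}$, $\int_{Q_r}dg\approx r^d$, and then interchanges the sum over $i$ with the integral over $h$ to obtain
\begin{equation*}
\int_X|f|^q\,dx\le N_d\int_{Q_r}\Bigl(\sum_i|f(g_ih\cdot o)|^q\Bigr)dh\le C(X,N_d)\,r^d\,\sup_{h\in G}\sum_i|f(g_ih\cdot o)|^q,
\end{equation*}
after which the $\ell^p\hookrightarrow\ell^q$ inequality converts the $q$-sum to a $p$-sum. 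This averaging-over-translates (Fubini) step is the missing idea; without it, or some substitute for it, your proof of the first inequality in (4.4) does not go through.
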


\begin{proof}

In what follows we fix a  $r>0$ and consider a cover of $X$ of
finite multiplicity $N_{d}$ by balls $\left\{B(g_{i}\cdot
o,r)\right\},$ which was constructed in Lemma 4.1. First we are
going to use the following inequality

\begin{equation}
|\psi(y)|\leq C_{1}(d,m)\sum_{0\leq j\leq
m}r^{j-d/p}\|\psi\|_{W_{p}^{j}(B(g_{i}\cdot o, r))}, m>d/p,
\end{equation}
  where
$y\in B(g_{i}\cdot o, r/2), \psi \in C^{\infty}\left(B(g_{i}\cdot
o, r)\right)$. From the inequality (4.6) we have for $1\leq
p<\infty$
\begin{equation}
\left(r^{d/p}|f(g_{i}\cdot o)|\right)^{p}\leq
C_{2}(d,m)\sum_{0\leq j\leq m}r^{jp}\|f\|_{W_{p}^{j}(B(g_{i}\cdot
o, r))}^{p}, m>d/p,
\end{equation}
and
$$
\sum_{i}\left(r^{d/p}|f(g_{i}\cdot o)|\right)^{p}\leq
C_{3}(d,m)\sum_{i}\sum_{0\leq j\leq
m}r^{jp}\|f\|_{W_{p}^{j}(B(g_{i}\cdot o, r))}^{p}, m>d/p.
$$

 We obtain
that for any given $m>d/p$ there exists a constant
 $C(X,N_{d},m)>0$, such that for any $(r,N_{d})$-lattice
  the following inequality holds true for $1\leq
p<\infty$

$$
\left(\sum_{i}\left(r^{d/p}|f(g_{i}\cdot
o)|\right)^{p}\right)^{1/p}\leq
C(X,N_{d},m)\left(\|f\|_{p}+r^{j}\sum_{j=1}^{m}\|f\|_{W^{j}_{p}(X)}\right).
$$
Since the vector fields $V_{1},...,V_{d},$ form a basis of the
tangent space at every point of $X$ the Sobolev norm
$\|f\|_{W^{k}_{p}(X)}$ for every $k\in \mathbb{N}$ is equivalent
to the norm
$$
\|f\|_{p}+\sum_{j=1}^{m} \sum_{0\leq k_{1},...,k_{j}\leq
d}\|V_{k_{1}}...V_{k_{j}}f\|_{p}, 1\leq p<\infty.
$$
We obtain

\begin{equation}
\left(\sum_{i}\left(r^{d/p}|f(g_{i}\cdot
o)|\right)^{p}\right)^{1/p}\leq
$$
$$
C(X,N_{d},m)\left(\|f\|_{p}+\sum_{j=1}^{m} \sum_{0\leq
k_{1},...,k_{j}\leq d}r^{j}\|V_{k_{1}}...V_{k_{j}}f\|_{p}\right),
m>d/p.
\end{equation}
Because every $V_{k}, k=1,...,d,$ is a generator of a
one-parameter isometric group of bounded operators in $L_{p}(X)$,
the following interpolation inequality holds true
\begin{equation}
r^{l}\|V^{l}_{k}f\|_{p}\leq
a^{m-l}r^{m}\|V_{k}^{m}f\|_{p}+c_{m}a^{-l}\|f\|_{p}, 1\leq
p<\infty,
\end{equation}
for any $1\leq l<m, a, r >0.$ The last two inequalities imply the
following estimate
\begin{equation}
\left(\sum_{i}(r^{d/p}|f(g_{i}\cdot o)|)^{p}\right)^{1/p}\leq
$$
$$
C(X, N_{d}, m)\left(\|f\|_{p}+r^{m} \sum_{0\leq
k_{1},...,k_{m}\leq d}\|V_{k_{1}}...V_{k_{m}}f\|_{p}\right),
m>d/p.
\end{equation}
 For $f\in E_{p}^{\omega}(\mathbb{V})$ it gives for $1\leq
p<\infty$
 \begin{equation}
\left(\sum_{i}\left(r^{d/p}|f(g_{i}\cdot
o)|\right)^{p}\right)^{1/p}\leq
C(X,N_{d},m)\left(1+(r\omega)^{m}\right)\|f\|_{p}, m>d/p.
\end{equation}
 Applying this inequality to a translated function
$f(h\cdot x), h\in G,$ and using invariance of the measure $dx$ we
obtain for $f\in E_{p}^{\omega}(\mathbb{V})$

$$
\sup_{h\in G}\left(\sum_{i}\left(r^{d/p}\left|f(hg_{i}\cdot
o)\right|\right)^{p}\right) ^{1/p}\leq
$$
\begin{equation}
 C(X,m)\left(1+(r\omega)^{m}\right)\sup_{h\in
G}\left\|f(h\cdot x)\right\|_{p}=
C(X,m)\left(1+(r\omega)^{m}\right)\|f\|_{p}.
\end{equation}

We introduce the following neighborhood of the identity in the
group $G$
$$
Q_{r}=\left\{g\in G : g\cdot o\in B(o,r)\right\}.
$$
According to the known formula

$$
\int_{X}f(x)dx=\int_{G}f(g\cdot o)dg, f\in C_{0}(X),
$$
we have for the characteristic function $\chi_{B}$ of the ball
$B(o,r)$
$$
r^{d}\approx\int_{B(o,r)}dx=
\int_{X}\chi_{B}(x)dx=\int_{G}\chi_{B}(g\cdot o)dg=\int_{Q_{r}}dg.
$$
Thus, since every ball in our cover is a translation of the ball
$B(o, r)$ by using $G$-invariance of the measure $dx$ we obtain
for any $f\in L_{q}(X), 1\leq q\leq \infty,$

$$\int_{X}|f(x)|^{q}dx\leq
N_{d}\sum_{i}\int_{B(g_{i}\cdot o,r)}|f(x)|^{q}dx\leq
N_{d}\sum_{i}\int_{B(o,r)}|f(g_{i}\cdot y)|^{q}dy=
$$
$$
C(X,N_{d})\int_{Q_{r}}\sum_{i}|f(g_{i}h\cdot o)|^{q}dh\leq
C(X,N_{d})r^{d}\sup_{g\in G}\sum_{i}|f(g_{i}g\cdot o)|^{q},
$$
where $d=dim X$. After all  we have
$$\|f\|_{q}\leq C(X,N_{d}) r^{d/q}\sup_{g\in
G}\left(\sum_{i}\left(|f(g_{i}g\cdot
o)|\right)^{q}\right)^{1/q}f\in L_{q}(X), 1\leq q\leq \infty.
$$

Next, using the inequality
$$
\left(\sum a_{i}^{q}\right)^{1/q}\leq \left(\sum
a_{i}^{p}\right)^{1/p},
$$
which holds true for any $ a_{i}\geq 0, 1\leq p\leq q\leq \infty,$
we obtain the following inequality

\begin{equation}
\|f\|_{q}\leq C(X,N_{d})r^{d/q} \sup_{g\in
G}\left(\sum_{i}\left(|f(g_{i}g\cdot
o)|\right)^{q}\right)^{1/q}\leq
$$
$$C(X,N_{d})r^{d/q} \sup_{g\in
G}\left(\sum_{i}\left(|f(g_{i}g\cdot o)|\right)^{p}\right)^{1/p}=
$$
$$C(X,N_{d})r^{d/q-d/p} \sup_{g\in
G}\left(\sum_{i}\left(r^{d/p}|f(g_{i}g\cdot
o)|\right)^{p}\right)^{1/p} .
\end{equation}

From the
 inequalities (4.12) and (4.13) and the observation, that
 for the element $ g=g_{i}^{-1}hg_{i} $ the expression
$$
\sum_{i}\left(r^{d/p}\left|f(g_{i}g\cdot o)\right|\right)^{p}
$$
becomes the expression
$$\sum_{i}\left(r^{d/p}\left|f(hg_{i}\cdot
o)\right|\right)^{p},
$$
we obtain the Theorem 4.2.

\end{proof}

As a consequence we have the following Corollary.

\begin{col}
For any $1\leq  p\leq q\leq \infty$ the following  embeddings hold
true
$$
E^{\omega}_{p}(\mathbb{V})\subset E^{\omega}_{q}(\mathbb{V}).
$$
In particular, the  spaces $E_{q}^{\omega}(\mathbb{V})$ are not
trivial at least if $2\leq q\leq \infty$ and $\omega\geq\|\rho\|$.
\end{col}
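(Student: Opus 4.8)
The plan is to deduce both assertions from the Nikolskii inequality of Theorem 4.2, combined with the equivalence $E^{\omega}_{p}(\mathbb{V})=\mathbb{E}^{\omega}_{p}(\mathbb{V})$ of Theorem 2.1 and the $V_{j}$-invariance of these spaces established in Theorem 2.2. First I would fix $f\in E^{\omega}_{p}(\mathbb{V})$ with $1\leq p\leq q\leq\infty$. That $f\in L_{q}(X)$ is immediate from the embedding $E^{\omega}_{p}(\mathbb{V})\subset L_{q}(X)$ already recorded in (4.5) of Theorem 4.2. By Theorem 2.1 it then suffices to show $f\in\mathbb{E}^{\omega}_{q}(\mathbb{V})$, that is, that for every choice of indices $1\leq i_{1},\dots,i_{k},j\leq d$, writing $g=V_{i_{1}}\cdots V_{i_{k}}f$, and every functional $h\in L_{q}(X)^{*}$, the scalar function $F(z)=\langle h,e^{zV_{j}}g\rangle$ is entire of exponential type $\omega$ and bounded on the real axis.

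The crucial step is to promote the $L_{p}$ Bernstein bound to an $L_{q}$ bound. For each $n$ the iterate $V_{j}^{n}g$ again lies in $E^{\omega}_{p}(\mathbb{V})$, by the invariance of Theorem 2.2; applying the Nikolskii inequality of Theorem 4.2 to $V_{j}^{n}g$ (with a fixed lattice and fixed $\omega$) followed by the $L_{p}$ Bernstein inequality yields $\|V_{j}^{n}g\|_{q}\leq C\,\omega^{n}\|g\|_{p}$, where the constant $C$ is independent of $n$. Hence the series $e^{zV_{j}}g=\sum_{n} z^{n}V_{j}^{n}g/n!$ converges in $L_{q}(X)$ and represents an abstract entire $L_{q}$-valued function satisfying $\|e^{zV_{j}}g\|_{q}\leq C\|g\|_{p}\,e^{|z|\omega}$, so of exponential type $\omega$; since each $e^{tV_{j}}$ is an isometry of $L_{q}(X)$, one has $\|e^{tV_{j}}g\|_{q}=\|g\|_{q}$ on the real line. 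Pairing with $h$ exactly as in the proof of Theorem 2.1 shows that $F(z)=\sum_{n} z^{n}\langle h,V_{j}^{n}g\rangle/n!$ is entire of exponential type $\omega$ with $|F(t)|\leq\|h\|\,\|g\|_{q}$ for real $t$. Thus $f\in\mathbb{E}^{\omega}_{q}(\mathbb{V})=E^{\omega}_{q}(\mathbb{V})$, which proves the embedding.

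The second assertion then follows at once. The embeddings recorded in the introduction and Section 3, namely $E^{\Omega/\sqrt{d}}_{2}(\mathbb{V})\subset PW_{\omega}(X)\subset E^{\Omega}_{2}(\mathbb{V})$ with $\Omega=\sqrt{\omega^{2}+\|\rho\|^{2}}$, together with the density of $\bigcup_{\omega>0}E^{\omega}_{2}(\mathbb{V})$ in $L_{2}(X)$, show that $E^{\omega}_{2}(\mathbb{V})\neq\{0\}$ for $\omega\geq\|\rho\|$. Applying the embedding just proved with $p=2\leq q$ gives $E^{\omega}_{2}(\mathbb{V})\subset E^{\omega}_{q}(\mathbb{V})$, so the latter is nontrivial whenever $2\leq q\leq\infty$ and $\omega\geq\|\rho\|$.

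I expect the main obstacle to be precisely the uniform-in-$n$ estimate $\|V_{j}^{n}g\|_{q}\leq C\omega^{n}\|g\|_{p}$: without Theorem 4.2 one has only the $L_{p}$ Bernstein inequality, which controls $\|V_{j}^{n}g\|_{p}$ but says nothing about the $L_{q}$ norm, and the entire-function argument in $L_{q}$ rests on the fact that the Nikolskii constant does not grow with $n$. Once that bound is secured, the remainder is a verbatim repetition of the argument of Theorem 2.1 carried out in $L_{q}$ in place of $L_{p}$.
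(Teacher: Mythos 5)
Your proposal is correct and follows essentially the same route as the paper: both arguments use the Nikolskii embedding to show that the series for $e^{zV_{j}}g$ converges in $L_{q}(X)$ as an entire function of exponential type $\omega$, use the isometry of $e^{tV_{j}}$ on $L_{q}(X)$ to bound it on the real axis by $\|h\|\,\|g\|_{q}$, and then apply the classical one-variable Bernstein inequality to recover the clean constant $\omega^{k}\|f\|_{q}$. The only cosmetic difference is that you package the final step as membership in $\mathbb{E}^{\omega}_{q}(\mathbb{V})$ and invoke Theorem 2.1, whereas the paper repeats that argument inline; your identification of the uniform-in-$n$ Nikolskii constant as the crux is exactly right.
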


Here the notation $\rho\in \textbf{a}^{*}$  means the half-sum of
 all  positive restricted roots.
\begin{proof}
Since $ E^{\omega}_{p}(\mathbb{V})$ is invariant under every
operator $V_{i}, 1\leq i\leq d,$ it is enough to show that if
$f\in E^{\omega}_{p}(\mathbb{V})$, then for any $1\leq j\leq d,
k\in \mathbb{N}$
$$
\|V_{j}^{k}f\|_{q}\leq \omega^{k}\|f\|_{q}.
$$
We are using the same arguments as in the proof of the Theorem
2.2. Namely, since $f\in E^{\omega}_{p}(\mathbb{V})$ we have for
any $z\in \mathbb{C}$

$$ \left\|e^{zV_{j}}f\right\|_{q}=
\left\|\sum
^{\infty}_{r=0}\left(z^{r}V_{j}^{r}f\right)/r!\right\|_{q} \leq
e^{|z|\omega}\|f\|_{p}.
$$

 As it was shown in the proof of the Theorem 2.1  it implies that for any
functional $h$ on $ L_{q}(X), 1\leq p\leq \infty,$ the scalar
function
$$
F(z)=\left<h, e^{zV_{j}}f\right>,
$$
is an entire function
 of exponential type $\omega$ which is bounded on the real axis  $\mathbb{R}^{1}$
by the constant $\|h\| \|f\|_{p}$. The classical Bernstein
inequality gives
$$ \sup_{t}\left|\left<h,
e^{tV_{j}}V_{j}^{k}f\right>\right|=\sup_{t}\left
|\left(\frac{d}{dt}\right)^{k}\left<h, e^{tV_{j}}f\right>\right
|\leq\omega^{k}\|h\| \|f\|_{q}, m\in\mathbb{N}.
$$

When $t=0$ we obtain

$$ \left|\left<h, V_{j}^{k}f\right>\right|\leq \omega^{k} \|h\|
 \|f\|_{q}.$$

Choosing $h$ such that $\|h\|=1$ and
\begin{equation}
\left<h, V_{j}^{k}f\right>=\|V_{j}^{k}f\|_{q}
\end{equation}
we get the inequality
\begin{equation}
\|V_{j}^{k}f\|_{q}\leq \omega^{k}\|f\|_{q}, k\in \mathbb{N}.
\end{equation}
The Corollary 4.1 is proved.

\end{proof}
Inequalities (4.4) and (4.5) are known as Nikolskii inequalities.
Now we are ready to prove a generalization of another inequality
which is also attributed to Nikolskii.

\begin{thm}
 There exists a constant $C(X)$
 such that for any $1\leq p\leq q\leq \infty$
the following  inequality holds true for all $f\in
E_{p}^{\omega}(\mathbb{V})$

\begin{equation}
\|f||_{q}\leq C(X)\omega^{\frac{d}{p}-\frac{d}{q}}\|f\|_{p}, d=dim
X.
\end{equation}

\end{thm}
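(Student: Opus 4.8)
The plan is to obtain the clean Nikolskii inequality (4.17) from the double inequality (4.4) of Theorem 4.2 by optimizing over the free lattice radius $r$. The decisive point is that (4.4) holds for \emph{every} $(r,N_{d})$-lattice, and by Lemma 4.1 such a lattice exists for any prescribed $r>0$; hence $r$ may be tuned to the given $\omega$.

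First I would fix, once and for all, an integer $m>d=\dim X$. Then $m>d/p$ holds simultaneously for every $p\in[1,\infty)$, so the constant $C(X,m)$ on the right-hand side of (4.4)---assembled from $d$, the multiplicity bound $N_{d}=12^{d}e^{\sqrt{d-1}}$, the pointwise Sobolev constant of (4.6), and the interpolation constant of (4.9)---becomes a constant depending on $X$ alone. This is exactly what removes the $p$-dependence forbidden by the statement of Theorem 4.3.

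Next, for the given $\omega>0$ I would specialize to a lattice of radius $r=1/\omega$. With this choice $(r\omega)^{m}=1$, so the factor $1+(r\omega)^{m}$ equals $2$, while the surviving power becomes $r^{d/q-d/p}=\omega^{d/p-d/q}$ (observe that $d/q-d/p\le 0$ since $p\le q$). Substituting into the outer estimate of (4.4) yields
$$
\|f\|_{q}\le 2\,C(X,m)\,\omega^{d/p-d/q}\|f\|_{p}
$$
for all $f\in E_{p}^{\omega}(\mathbb{V})$ and all $1\le p<q\le\infty$, which is precisely (4.17) with $C(X)=2C(X,m)$. The diagonal case $p=q$ is disposed of separately and trivially: there $\omega^{d/p-d/q}=1$, and the claim reduces to $\|f\|_{p}\le C(X)\|f\|_{p}$, valid as soon as $C(X)\ge1$.

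I do not anticipate a genuine obstacle, since the whole analytic burden---the pointwise Sobolev bound (4.6), the $G$-invariant lattice construction of Lemma 4.1, and the interpolation inequality (4.9)---was already carried out inside Theorem 4.2. The only real care is the bookkeeping of constants: one must verify that the single choice $m>d$ makes every constant uniform in $p$, and that the one scaling $r=1/\omega$ simultaneously annihilates the $1+(r\omega)^{m}$ factor and manufactures the exact exponent $d/p-d/q$. This is the classical dilation trick behind the Euclidean Nikolskii inequality (1.1), transplanted to $X$ through the lattices of Lemma 4.1.
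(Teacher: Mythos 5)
Your proposal is correct and follows essentially the same route as the paper: both take the estimate $\|f\|_{q}\leq C(X,N_{d},m)\,r^{d/q-d/p}\bigl(1+(r\omega)^{m}\bigr)\|f\|_{p}$ from Theorem 4.2 with a fixed $m$ independent of $p$ and then tune the lattice radius to $r\sim 1/\omega$ using Lemma 4.1. The only difference is cosmetic: the paper substitutes $t=r\omega$, fixes $m=2d$, and minimizes $\eta_{p,q}(t)=t^{d/q-d/p}(1+t^{m})$ to get a value $\leq 2$, whereas you simply take $t=1$, for which $\eta_{p,q}(1)=2$ outright --- both choices yield the same constant $2C(X,m)$.
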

\begin{proof} The Theorem 4.2 imply that for any  $(r,N_{d})$-lattice, any
  $m>d/p$ there exists a constant $C(X,N_{d},m)>0$  such that
 for any $\omega>0$ and any $1\leq p\leq q\leq \infty$ the following inequality holds
 true
 $$
 \|f\|_{q}\leq C(X,N_{d},m)r^{d/q-d/p}(1+(r\omega)^{m})\|f\|_{p},
 $$
 for all $f\in E^{\omega}_{p}(X)$.
We make the substitution $t=r\omega$ into this inequality  to
obtain

\begin{equation}
\|f\|_{q}\leq
C(X,N_{d},m)\omega^{d/p-d/q}\left(t^{d/q-d/p}(1+t^{m})\right)\|f\|_{p}=
$$
 $$
C(X,N_{d},m)\eta_{p,q}(t)\omega^{d/p-d/q}\|f\|_{p}, m>d/p,
\end{equation}
where
$$
\eta_{p,q}(t)=t^{d/q-d/p}(1+t^{m}), t\in(0,\infty).
$$

Since $m$ can be any number greater than $d/p$ and $p\geq 1,$ we
fix the number $m=2d$. At the point
\begin{equation}
t_{d,p,q}=\frac{\alpha}{2d-\alpha}\in (0,1),
\end{equation}
where $0< \alpha=d/p-d/q< 1,$ the function $\eta_{p,q}$ has its
minimum, which is
$$
\eta_{p,q}(t_{d,p,q})=\frac{1}{(1-\beta)^{1-\beta}\beta^{\beta}}\leq
2,
$$
where $\beta=\alpha/2d$.  Thus, if we would substitute this
$t_{d,p,q}$ into (4.17) we would have for any   $\omega>0$ and any
$1\leq p\leq q\leq \infty$ the desired inequality
$$
\|f\|_{q}\leq
2C(X,N_{d})\omega^{\frac{d}{p}-\frac{d}{q}}\|f\|_{p}, d=dim X,
1\leq p\leq q\leq \infty.
$$

For a given $d\in \mathbb{N}, \omega>0, 1\leq p\leq q\leq \infty,$
we can find corresponding $t_{d,p,q}$ using the formula (4.18) and
then can find the  corresponding $r>0$ by using the formula
\begin{equation}
r=r_{d,p,q, \omega}=\frac{t_{d,p,q}}{\omega}.
\end{equation}
According to the Lemma 4.1 for such $r$ from (4.19) one can find a
cover of the same multiplicity $N_{d}$.  For this cover  we will
have the inequality (4.16). The Theorem is proved.

\end{proof}

The next goal is to show Plancherel-Polya-type inequalities (1.8).

For a fixed $(r, N_{d})$-lattice $Z(x_{\nu}, r, N_{d})$ (see
Definition 4) we consider the following set
$\Phi=\left\{\Phi_{\nu}\right\}$ of distributions $\Phi_{\nu}$.

 Let $K_{\nu}\subset B(x_{\nu},r/2)$ be a compact
subset and $\mu_{\nu}$ be a non-negative measure on $K_{\nu}$. We
will always assume that the total measure of $K_{\nu}$ is finite,
i.e.
$$
0<|K_{\nu}|=\int_{K_{\nu}}d\mu_{\nu}<\infty.
$$

We consider the following distribution on
$C_{0}^{\infty}(B(x_{\nu},r)),$
\begin{equation}
\Phi_{\nu}(\varphi)=\int_{K_{\nu}}\varphi d\mu_{\nu},
\end{equation}
where $\varphi \in C_{0}^{\infty}(B(x_{\nu},r)).$ As a compactly
supported distribution of order zero it has a unique continuous
extension to the space $C^{\infty}(B(x_{\nu}, r))$.

\bigskip

We say that a family $\Phi=\{\Phi_{\nu}\}$ is uniformly bounded,
if there exists a positive constant $C_{\Phi}$ such that
\begin{equation}
|K_{\nu}|\leq C_{\Phi}
 \end{equation}
 for all $\nu$.

We will also say that a family $\Phi=\{\Phi_{\nu}\}$ is separated
from zero if there exists a constant $c_{\Phi}>0$ such that
\begin{equation}
|K_{\nu}|\geq c_{\Phi}
 \end{equation}
  for all $\nu$.

\bigskip

The next goal is to obtain the Plancherel-Polya inequalities for
functions from $E_{p}^{\omega}(\mathbb{V})$.

\begin{thm}For any given $m>d/p$ and $C_{\Phi}>0$ there exist
$C_{1}=C_{1}(N_{d}, C_{\phi}, m)>0,$ $ C_{2}=C_{2}(N_{d},
 C_{\Phi}, m)>0, r(N_{d},
C_{\Phi}, m)>0$ such that for any $(r,N_{d})$-lattice $Z(x_{\nu},
r, N_{d})$ with $ 0<r<r(N_{d}, C_{\Phi}, m),$ for any family
$\{\Phi_{\nu}\}$ of distributions of type (4.20) with supports in
$B(x_{\nu}, r/2)$ which satisfy (4.21) and (4.22) with given
$C_{\Phi}$ the following inequalities hold true

\begin{equation}
\left(\sum_{\nu}|\Phi_{\nu}(f)|^{p}\right)^{1/p}\leq
C_{1}r^{-d/p}\left(\|f\|_{p}+\sum_{1\leq k_{1},...,k_{m}\leq
d}\|V_{k_{1}}...V_{k_{m}}f\|_{p}\right),
 \end{equation}
$$
\|f\|_{p}\leq
$$
\begin{equation}
 C_{2}\left\{r^{d/p}\left(
\sum_{\nu}|K_{\nu}|^{-1}|\Phi_{\nu}(f)|^{p}\right)^{1/p}+
c_{\Phi}^{-1}r^{m} \sum_{1\leq k_{1},...,k_{m}\leq
d}\|V_{k_{1}}...V_{k_{m}}f\|_{p} \right\}.
\end{equation}
\end{thm}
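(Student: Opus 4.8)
The plan is to prove both inequalities by localizing to the balls $B(x_\nu,r)$ of the $(r,N_d)$-lattice and establishing, on each ball, a scale-invariant estimate that I transfer from the Euclidean unit ball through the normal chart $x=\exp_{x_\nu}(ry)$. Throughout I use that for $r$ below a threshold the geodesic balls $B(x_\nu,r)$ lie in normal coordinate systems with geometry uniformly comparable to the Euclidean one, so that all local Sobolev, Poincar\'e, and embedding constants, as well as the comparisons $\|f\|_{L_p(B(x_\nu,\rho r))}\asymp r^{d/p}\|g\|_{L_p(B(0,\rho))}$ and $\|V_{k_1}\cdots V_{k_j}f\|_{L_p(B(x_\nu,r))}\asymp r^{-j+d/p}\|\nabla^{j}g\|_{L_p(B(0,1))}$ for $g(y)=f(\exp_{x_\nu}(ry))$, hold with constants independent of $\nu$ and of $r$. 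Here $\nabla^{j}g$ abbreviates the collection of Euclidean derivatives of order $j$.

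For the upper bound (4.23) I would argue directly. Since $\Phi_\nu(f)=\int_{K_\nu}f\,d\mu_\nu$ with $K_\nu\subset B(x_\nu,r/2)$ and $|K_\nu|\le C_\Phi$, one has $|\Phi_\nu(f)|\le C_\Phi\sup_{B(x_\nu,r/2)}|f|$. The pointwise Sobolev inequality (4.6), valid because $m>d/p$, gives $|\Phi_\nu(f)|\le C(C_\Phi,d,m)\,r^{-d/p}\sum_{0\le j\le m}r^{j}\|f\|_{W_p^{j}(B(x_\nu,r))}$; as $r$ is bounded above, the bracket is dominated by $C\|f\|_{W_p^{m}(B(x_\nu,r))}$. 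Raising to the $p$-th power, summing over $\nu$, using the finite multiplicity $N_d$ of the cover to pass from local to the global Sobolev norm, and then applying the norm equivalence (4.3) to replace $\|f\|_{W_p^m(X)}$ by $\|f\|_p+\sum_{1\le k_1,\dots,k_m\le d}\|V_{k_1}\cdots V_{k_m}f\|_p$, yields (4.23) with $C_1=C_1(N_d,C_\Phi,m)$.

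The substantive part is the lower bound (4.24), and its heart is a local estimate. On the rescaled unit ball, with $g$ as above and $\bar g$ the Lebesgue mean of $g$ over $B(0,1/2)$, a Poincar\'e inequality bounds $\|g-\bar g\|_{L_p(B(0,1/2))}$ by $\sum_{1\le j\le m}\|\nabla^{j}g\|_{L_p(B(0,1))}$, while the Sobolev embedding $W_p^m\hookrightarrow C^0$ (again $m>d/p$) controls $\sup_{K_\nu}|g-\bar g|$, and hence $|\bar g-|K_\nu|^{-1}\Phi_\nu(f)|=|K_\nu|^{-1}\bigl|\int_{K_\nu}(\bar g-g)\,d\mu_\nu\bigr|$, by the same derivative sum. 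Combining these and using $c_\Phi\le|K_\nu|\le C_\Phi$ gives $\|g\|_{L_p(B(0,1/2))}\le C\bigl(|K_\nu|^{-1/p}|\Phi_\nu(f)|+\sum_{1\le j\le m}\|\nabla^{j}g\|_{L_p(B(0,1))}\bigr)$, the mass bounds converting the naturally appearing weight $|K_\nu|^{-1}$ into the $|K_\nu|^{-1/p}$ of (4.24). Rescaling back produces $\|f\|_{L_p(B(x_\nu,r/2))}\le C\bigl(r^{d/p}|K_\nu|^{-1/p}|\Phi_\nu(f)|+\sum_{1\le j\le m}r^{j}\|\nabla^{j}f\|_{L_p(B(x_\nu,r))}\bigr)$.

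To finish, I take $p$-th powers and sum over $\nu$: the left side dominates $\|f\|_p^p$ because the balls $B(x_\nu,r/2)$ cover $X$, and the right side is controlled, through the finite multiplicity $N_d$, by $r^{d}\sum_\nu|K_\nu|^{-1}|\Phi_\nu(f)|^p$ together with $\sum_{1\le j\le m}r^{jp}\bigl(\sum_{1\le k_1,\dots,k_j\le d}\|V_{k_1}\cdots V_{k_j}f\|_p\bigr)^p$. This is (4.24) with all orders $1\le j\le m$ present; to reduce to the top order alone I invoke the interpolation inequality (4.9), choosing its free parameter large so that each intermediate term $r^{j}\|V_k^{j}f\|_p$ with $1\le j<m$ is bounded by a multiple of $r^{m}\|V_k^{m}f\|_p$ plus an arbitrarily small multiple of $\|f\|_p$, the latter being absorbed into the left-hand side. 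I expect the main obstacle to be the \emph{uniformity} of the local estimate over the whole admissible family $\{\Phi_\nu\}$: only the total masses are controlled, with no assumption on how $\mu_\nu$ is distributed inside $K_\nu$, so a Rellich compactness argument (whose constant would depend uncontrollably on the varying measure) is unavailable and one must route the estimate through the Lebesgue mean as above, with the constant shown to depend on the measures only through $c_\Phi$ and $C_\Phi$. The restriction $r<r(N_d,C_\Phi,m)$ enters precisely to make the bounded-geometry rescaling and hence all these local constants uniform in $\nu$.
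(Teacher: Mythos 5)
Your proposal is correct, and for the first inequality (4.23) it coincides with the paper's argument: bound $|\Phi_{\nu}(f)|$ by $C_{\Phi}\sup_{B(x_{\nu},r/2)}|f|$, apply the scale-invariant pointwise Sobolev estimate (the paper's (4.25)), sum over $\nu$ using the multiplicity $N_{d}$, and pass to vector fields via the norm equivalence. For the second inequality (4.24) you take a genuinely different route to the key local estimate. The paper proves directly that $\|f-|K_{\nu}|^{-1}\Phi_{\nu}(f)\|_{L_{p}(B(x_{\nu},r/2))}$ is dominated by $\sum_{1\leq|\alpha|\leq k}r^{|\alpha|}\|\partial^{|\alpha|}f\|_{L_{p}(B(x_{\nu},r))}$ by writing out the Taylor formula with integral remainder centered at $y\in K_{\nu}$, averaging in $y$ against $d\mu_{\nu}/|K_{\nu}|$, and applying Minkowski's inequality together with (4.25); this produces the comparison with $\Phi_{\nu}(f)$ in one stroke and makes transparent that the constant depends on the measure only through its total mass. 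You instead interpolate through the Lebesgue mean $\bar g$ on the rescaled unit ball, using a Poincar\'e inequality to control $\|g-\bar g\|_{L_{p}}$ and the embedding $W_{p}^{m}\hookrightarrow C^{0}$ to control $|\bar g-|K_{\nu}|^{-1}\Phi_{\nu}(f)|$; this is equally valid, replaces the explicit Taylor computation with two named inequalities, and, as you correctly observe, is the right way to see that no regularity of $\mu_{\nu}$ beyond the two-sided mass bounds is needed. You are also more explicit than the paper on the final step: the local estimates produce all intermediate derivative orders $1\leq j\leq m$, and reducing to the top order alone requires the interpolation inequality (4.9) with its free parameter chosen large, followed by absorption of the small multiple of $\|f\|_{p}$ into the left-hand side; the paper states (4.26) with all orders and then simply says that summation gives (4.24), leaving this reduction implicit. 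One small caveat applies equally to both arguments: (4.9) is stated for powers of a single field $V_{k}^{l}$, while the intermediate Sobolev terms involve mixed products $V_{k_{1}}\cdots V_{k_{j}}$, so strictly speaking one needs the Ehrling-type interpolation for the full intermediate Sobolev norms; since the paper relies on the same step in Theorem 4.2, this is not a defect specific to your write-up.
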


The similar Theorem was proved in \cite{Pes4} in the case $p=2$.
In what follows we just sketch the proof.

\begin{proof}

The inequality (4.23) follows from the definitions of
 the distributions $\Phi_{\nu}$ and the inequality
\begin{equation}
|\psi(y)|\leq C_{0}(d,k)\sum_{0\leq j\leq
k}r^{j-d/p}\|\psi\|_{W_{p}^{j}(B(x_{\nu},r))}, k>d/p,
\end{equation}
where $y\in B(x_{\nu},r/2), \psi \in C^{\infty}(B(x_{\nu},r))$.

To prove (4.24) we show that for any $k>d/p$ there exists a
constant $C=C(d, k)>0, $
 such that for  any ball $B(x_{\nu}, r), x_{\nu}\in
 M,$ any distribution $\Phi_{\nu}$ of type (4.20)
 the following inequality holds true

\begin{equation}
\left\|f-|K_{\nu}|^{-1}\Phi_{\nu}(f)\right\|_{L_{p}(B(x_{\nu},r/2)}\leq
C(d,k)\sum_{1\leq |\alpha|\leq
k}r^{|\alpha|}\|\partial^{|\alpha|}f\|_{L_{p}(B(x_{\nu},r))},
\end{equation}
where  $f\in W_{p}^{k}(M), k>d/p, 1\leq p\leq \infty,$ and
$\partial^{j}f$ is a partial derivative of order $j$.

 To prove the inequality (4.26) we make use of the Taylor series. For any $f\in
C^{\infty}(B(x_{\nu},r/2))$, every $x,y\in B(x_{\nu},r/2)$ we have
the following
$$
f(x)=f(y)+\sum_{1\leq|\alpha|\leq k-1} \frac{1}{\alpha
!}\partial^{|\alpha|}f(y)(x-y) ^{\alpha}+
$$
$$
\sum_{|\alpha|=k}\frac{1}{\alpha !}\int_{0}^{\eta}t^{k-1}\partial
^{|\alpha|}f(y+t\vartheta)\vartheta^{\alpha}dt,
$$
where $x=(x_{1},...,x_{d}), y=(y_{1},...,y_{d}), \alpha=(
\alpha_{1},...,\alpha_{d}),
(x-y)^{\alpha}=(x_{1}-y_{1})^{\alpha_{1}}...
(x_{d}-y_{d})^{\alpha_{d}}, \eta=\|x-y\|, \vartheta=(x-y)/\eta.$

We  integrate each term over compact $K_{\nu}\subset B(x_{\nu},r)$
against $d\mu_{\nu}(y)$, where $d\mu_{\nu}$ is the measure on
$K_{\nu}$. After all we obtain
$$
\left\|f-|K_{\nu}|^{-1}\Phi_{\nu}(f)\right\|_{L_{p}(B(x_{\nu},r/2))}\leq
$$
$$
C(k,d)|K_{\nu}|^{-1}\sum_{1\leq|\alpha|\leq
k-1}\left(\int_{B(x_{\nu},r/2)}\left(
\int_{K_{\nu}}\left|\partial^{|\alpha|}f(y)(x-y)
^{\alpha}\right|d\mu_{\nu}(y)\right)^{p}dx\right)^{1/p}+
$$
\begin{equation}
C(k,d)|K_{\nu}|^{-1}
\sum_{|\alpha|=k}\left(\int_{B(x_{\nu},r/2)}\left(\int_{K_{\nu}}
\left|\int_{0}^{\eta}t^{k-1}\partial
^{|\alpha|}f(y+t\vartheta)\vartheta^{\alpha}dt\right|d\mu_{\nu}(y)\right)^{p}dx\right)^{1/p}.
\end{equation}

By using the Minkowski inequality and the estimate (4.25) we
obtain (4.26). Summation over all $\nu$ gives the inequality
(4.24).
\end{proof}

The Theorems 4.2, 4.4 and the Bernstein inequality (1.5) imply the
following Plancherel-Polya inequalities.

\begin{thm}

For any given $\omega>0, C_{\Phi}>0, c_{\Phi}>0, m=0,1,2,...,$
there exist positive constants $C,  c_{1}, c_{2},$ such that for
every $\rho$-lattice $Z(x_{\nu},r , N_{d})$ with $0<r<
(C\omega)^{-1}$, every family of distributions $\{\Phi_{\nu}\}$ of
the form (4.20) with properties (4.21), (4.22) and every $f\in
E^{\omega}_{p}(X)$ the following inequalities hold true

\begin{equation}
c_{1}\left(\sum_{\nu}\left|\Phi_{\nu}(f)\right|^{p}\right)^{1/p}\leq
r^{-d/p}\|f\|_{p} \leq c_{2}\left(\sum_{\nu}
|\Phi_{\nu}(f)|^{p}\right)^{1/p}.
\end{equation}

\end{thm}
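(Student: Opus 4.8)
The plan is to derive the final Plancherel--Polya inequalities (4.29) by combining the two-sided estimate of Theorem 4.4 with the Bernstein inequality that defines $E^{\omega}_{p}(\mathbb{V})$. The key observation is that Theorem 4.4 bounds the discrete norm $\left(\sum_{\nu}|\Phi_{\nu}(f)|^{p}\right)^{1/p}$ above and below by expressions involving $\|f\|_{p}$ together with the higher-order terms $\sum_{1\leq k_{1},\dots,k_{m}\leq d}\|V_{k_{1}}\dots V_{k_{m}}f\|_{p}$. For a function $f\in E^{\omega}_{p}(\mathbb{V})$ the Bernstein inequality (2.2) gives $\|V_{k_{1}}\dots V_{k_{m}}f\|_{p}\leq \omega^{m}\|f\|_{p}$, and there are at most $d^{m}$ such terms, so the whole sum is controlled by $d^{m}\omega^{m}\|f\|_{p}$. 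This is the mechanism that converts the Sobolev-type remainder terms into a pure multiple of $\|f\|_{p}$.

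First I would treat the right-hand (upper) bound on $r^{-d/p}\|f\|_{p}$. Starting from (4.24), I would use $|K_{\nu}|\geq c_{\Phi}$ to replace the weighted discrete norm by $c_{\Phi}^{-1/p}\left(\sum_{\nu}|\Phi_{\nu}(f)|^{p}\right)^{1/p}$, and apply the Bernstein bound to the remainder, obtaining
\begin{equation}
\|f\|_{p}\leq C_{2}\left\{r^{d/p}c_{\Phi}^{-1/p}\left(\sum_{\nu}|\Phi_{\nu}(f)|^{p}\right)^{1/p}+c_{\Phi}^{-1}d^{m}(r\omega)^{m}\|f\|_{p}\right\}.
\end{equation}
The crucial point is to choose $r$ small enough, say $r<(C\omega)^{-1}$ with $C$ depending on $d,m,c_{\Phi},C_{2}$, so that the coefficient $C_{2}c_{\Phi}^{-1}d^{m}(r\omega)^{m}$ of $\|f\|_{p}$ on the right is at most $1/2$. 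That term can then be absorbed into the left-hand side, yielding $\|f\|_{p}\leq c_{2}r^{d/p}\left(\sum_{\nu}|\Phi_{\nu}(f)|^{p}\right)^{1/p}$, which after multiplying by $r^{-d/p}$ is exactly the right inequality in (4.29).

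For the left-hand (lower) bound I would start from (4.23) and again invoke the Bernstein inequality to bound $\sum_{1\leq k_{1},\dots,k_{m}\leq d}\|V_{k_{1}}\dots V_{k_{m}}f\|_{p}\leq d^{m}\omega^{m}\|f\|_{p}$, so that the right-hand side of (4.23) becomes $C_{1}r^{-d/p}(1+d^{m}(r\omega)^{m})\|f\|_{p}$. Since $r\omega<C^{-1}$ is bounded, the factor $1+d^{m}(r\omega)^{m}$ is bounded by a constant, and this produces $\left(\sum_{\nu}|\Phi_{\nu}(f)|^{p}\right)^{1/p}\leq c_{1}^{-1}r^{-d/p}\|f\|_{p}$, the left inequality in (4.29) after renaming the constant. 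I expect the main obstacle to be the absorption step in the upper bound: one must verify that the smallness threshold $r<(C\omega)^{-1}$ can be chosen uniformly in $\nu$ and in the lattice, depending only on the allowed data $\omega, C_{\Phi}, c_{\Phi}, m, N_{d}$, and that the constant $C_{2}$ from Theorem 4.4 does not itself degenerate as $r\to 0$; granting Theorem 4.4 as stated, this reduces to the elementary inequality $C_{2}c_{\Phi}^{-1}d^{m}(r\omega)^{m}\leq 1/2$, which determines $C$ explicitly.
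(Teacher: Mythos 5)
Your argument is correct and is precisely the implication the paper has in mind: the paper offers no written proof of this theorem beyond the remark that it follows from Theorem 4.4 together with the Bernstein inequality, and your absorption argument (bounding the $d^{m}$ derivative terms by $d^{m}\omega^{m}\|f\|_{p}$, using $|K_{\nu}|\geq c_{\Phi}$, and choosing $C$ so that $C_{2}c_{\Phi}^{-1}d^{m}(r\omega)^{m}\leq 1/2$) is exactly the intended route. The only point worth flagging is that inequality (4.23) as printed omits the factor $r^{m}$ on the derivative sum that your reading assumes; this does not matter here since the constants are allowed to depend on $\omega$, so the lower bound follows either way.
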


\bigskip

 In the case of Euclidean
space when $\Phi_{\nu}=\delta_{x_{\nu}}$ and $\{x_{\nu}\}$ is the
regular lattice the above inequality represents the classical
Plancherel-Polya inequalities.

The notation $l_{p}^{\omega}$ will be used for a linear subspace
of all sequences $\{v_{\nu}\}$ in $l_{p}$ for which there exists a
function $f$ in $E^{\omega}_{p}(X)$ such that
$$
\Phi_{\nu}(f)=v_{\nu}, \nu\in \mathbb{N}.
$$
In general $l_{p}^{\omega}\neq l_{p}$.

\begin{defn}
A linear reconstruction method $R$ is a linear operator
$$
R:l_{p}^{\omega}\rightarrow E^{\omega}_{p}(X)
$$
such that
$$
R: \left\{\Phi_{\nu}(f)\right\}\rightarrow f.
$$

 The reconstruction method is said to be stable, if it is
continuous in topologies induced respectively by $l_{p}$ and
$L_{p}(X)$.
\end{defn}

The following result is a  consequence of the Plancherel-Polya
inequalities and the fact that $E^{\omega}_{p}(X), 1\leq
p\leq\infty,$ is a linear space.

\begin{thm}

For any given $\omega>0, C_{\Phi}>0, c_{\Phi}>0, m=0,1,2,...,$
there exist positive constants $C,  c_{1}, c_{2},$ such that for
every $\rho$-lattice $Z(x_{\nu},r , N_{d})$ with $0<r<
(C\omega)^{-1}$, every family of distributions $\{\Phi_{\nu}\}$ of
the form (4.20) with properties (4.21), (4.22) and every $f\in
E^{\omega}_{p}(X)$ the following statements hold true

\bigskip
1)  every function $f$ from $E^{\omega}_{p}(X), 1\leq
p\leq\infty,$ is uniquely determined by the set of samples
$\left\{\Phi_{\nu}(f)\right\}$;

\bigskip

2)  reconstruction method $R$ from a set of samples
$\left\{\Phi_{\nu}(f)\right\}$
$$
R:\left\{\Phi_{\nu}(f)\right\}\rightarrow f
$$
is stable.
\end{thm}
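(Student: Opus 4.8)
The plan is to deduce both assertions directly from the Plancherel-Polya inequalities of Theorem 4.5, using only the upper estimate
$$
r^{-d/p}\|f\|_{p}\leq c_{2}\left(\sum_{\nu}|\Phi_{\nu}(f)|^{p}\right)^{1/p},
$$
together with the linearity of the space $E^{\omega}_{p}(X)$ established in Theorem 2.2 and the linearity of the sampling functionals $\Phi_{\nu}$. No new geometry or harmonic analysis is required; the substantive estimates have already been carried out in Theorem 4.5 and the results feeding into it.

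For the uniqueness statement 1) I would argue by linearity. Suppose $f,g\in E^{\omega}_{p}(X)$ satisfy $\Phi_{\nu}(f)=\Phi_{\nu}(g)$ for every $\nu$. Since $E^{\omega}_{p}(X)$ is a linear space, the difference $h=f-g$ again lies in $E^{\omega}_{p}(X)$, and by linearity of each $\Phi_{\nu}$ one has $\Phi_{\nu}(h)=0$ for all $\nu$. Applying the upper Plancherel-Polya estimate to $h$ gives
$$
r^{-d/p}\|h\|_{p}\leq c_{2}\left(\sum_{\nu}|\Phi_{\nu}(h)|^{p}\right)^{1/p}=0,
$$
so $\|h\|_{p}=0$ and hence $f=g$. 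Thus the samples $\{\Phi_{\nu}(f)\}$ determine $f$ uniquely.

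For the stability statement 2) I would first observe that the uniqueness just proved is exactly what makes the reconstruction operator $R\colon l_{p}^{\omega}\to E^{\omega}_{p}(X)$ well defined, and that the linearity of $E^{\omega}_{p}(X)$ and of the $\Phi_{\nu}$ makes $R$ linear: if $\{v_{\nu}\}$ and $\{w_{\nu}\}$ correspond to $f$ and $g$, then $\{v_{\nu}+w_{\nu}\}$ corresponds to $f+g$, and similarly for scalar multiples. For a linear map, continuity in the stated topologies is equivalent to boundedness, so it suffices to bound $\|R(\{v_{\nu}\})\|_{p}$ by the $l_{p}$-norm of the sample sequence. Writing $f=R(\{v_{\nu}\})$, so that $\Phi_{\nu}(f)=v_{\nu}$, the same upper estimate yields
$$
\|f\|_{p}\leq c_{2}r^{d/p}\left(\sum_{\nu}|v_{\nu}|^{p}\right)^{1/p}=c_{2}r^{d/p}\,\|\{v_{\nu}\}\|_{l_{p}},
$$
which is precisely the boundedness, hence stability, of $R$.

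The argument is essentially formal once Theorem 4.5 is available, so there is no genuine analytic obstacle here. The only point requiring care is the logical ordering: one must establish uniqueness (statement 1) before the operator $R$ of statement 2) can even be asserted to be well defined, after which the stability demanded by the definition reduces, for the linear map $R$, to the single norm bound furnished by the right-hand Plancherel-Polya inequality.
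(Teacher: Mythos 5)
Your proposal is correct and matches the paper's intent exactly: the paper states this theorem as an immediate consequence of the Plancherel--Polya inequalities of Theorem 4.5 and the linearity of $E^{\omega}_{p}(X)$, which is precisely the argument you spell out. Your write-up simply makes explicit the two routine deductions (uniqueness via the upper bound applied to a difference, stability via boundedness of the linear operator $R$) that the paper leaves to the reader.
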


\makeatletter \renewcommand{\@biblabel}[1]{\hfill#1.}\makeatother


\begin{thebibliography}{2}


\bibitem{Ad}
 R.A.~Adams, {\em Sobolev spaces}, Academic Press,
1975.


\bibitem{Akh}
J.~Akhiezer, {\em Theory of approximation}, Ungar, NY, 1956.


\bibitem{BS}
M.Birman and M.Solomyak, {\em Spectral thory of selfadjoint
operators in Hilbert space}, D.Reidel Publishing Co., Dordrecht,
1987.

\bibitem {B}
R.~Boas, {\em Entire Functions}, New York:Academic Press, 1954.


\bibitem {BB}
P.~Butzer, H.Berens, {\em Semi-Groups of operators and
approximation}, Springer, Berlin, 1967 .

\bibitem {EM}
L.~Ehrenpreis,  F.~Mautner, {\em Some properties of the Fourier
transform on semi-simple  Lie groups, I}, Ann. of Math. 61,
(1955), 406-439.

\bibitem{H1}
S.~Helgason, {\em A duality for symmetric spaces with applications
to group representations}, Adv. Math. {\bf 5}, (1970), 1-154.

\bibitem {H2}
S.~Helgason, {\em Differential Geometry and Symmetric Spaces},
Academic, N.Y., 1962.


\bibitem{N}
S.M.~Nikolskii, {\em Approximation of functions of several
variables and imbedding theorems}, Springer, Berlin, 1975.


\bibitem{PW}
R.E.A.C.~Paley and N.~Wiener, {\em Fourier Transforms in the
Complex Domain}, Coll. Publ.,19,  Providence: Amer. Math. Soc.,
(1934).

\bibitem{Pes1}
I.~Pesenson, {\em The Best Approximation in a Representation Space
of a Lie Group}, Dokl. Acad. Nauk USSR, v. 302, No 5, pp.
1055-1059, (1988) (Engl. Transl. in Soviet Math. Dokl., v.38, No
2, pp. 384-388, 1989.)


\bibitem{Pes2}
 I.~Pesenson, {\em  The Bernstein Inequality in the Space of
Representation of Lie group}, Dokl. Acad. Nauk USSR {\bf 313}
(1990), 86--90;
 English transl. in Soviet Math. Dokl. {\bf 42} (1991).



\bibitem{Pes3}
 I.~Pesenson, {\em  A sampling theorem on homogeneous manifolds},
 Trans. of AMS, Vol. {\bf 352}(9),(2000), 4257-4269.



\bibitem{Pes4}
I.~Pesenson, {\em Poincare-type inequalities and reconstruction of
Paley-Wiener functions on manifolds}, J. of Geom. Anal., {\bf
4}(1), (2004), 101-121.


\bibitem{Pes5}
I.~Pesenson, {\em Deconvolution of band limited functions on
symmetric spaces}, will appear in the Houston J. of Math.



\bibitem{Pes6}
I.~Pesenson, {\em Frames in Paley-Wiener spaces on Riemannian
manifolds}, will appear in Contemp. Math.




\bibitem{SS}
H.-J.~Schmeisser, W.~Sickel,
 Sampling theory and function spaces. Applied mathematics reviews,
  Vol. 1, 205--284, World Sci. Publishing, River Edge, NJ, 2000.





\bibitem{T1}
H.~Triebel, {\em Theory of function spaces}, Birkhuser Verlag,
Basel, 1983.


\bibitem{T2}
H.~Triebel,{\em  Theory of function spaces II,}
  Monographs in Mathematics, 84. Birkhäuser Verlag, Basel, 1992.

\end{thebibliography}
\end{document}